\numberwithin{equation}{section}
\newtheorem{theorem}{Theorem}[section]
\newtheorem{claim}[theorem]{Claim}
\newtheorem{lemma}[theorem]{Lemma}
\newtheorem{proposition}[theorem]{Proposition}
\newtheorem{corollary}[theorem]{Corollary}
\newtheorem{remark}[theorem]{Remark}
\newtheorem{definition}[theorem]{Definition}
\long\def\xcom#1{}
\newcommand{\cA}{{\ensuremath{\mathcal A}} }
\newcommand{\cB}{{\ensuremath{\mathcal B}} }
\newcommand{\cD}{{\ensuremath{\mathcal D}} }
\newcommand{\cE}{{\ensuremath{\mathcal E}} }
\newcommand{\cF}{{\ensuremath{\mathcal F}} }
\newcommand{\cG}{{\ensuremath{\mathcal G}} }
\newcommand{\cH}{{\ensuremath{\mathcal H}} }
\newcommand{\cI}{{\ensuremath{\mathcal I}} }
\newcommand{\cL}{{\ensuremath{\mathcal L}} }
\newcommand{\cO}{{\ensuremath{\mathcal O}} }
\newcommand{\cV}{{\ensuremath{\mathcal V}} }
\newcommand{\cW}{{\ensuremath{\mathcal W}} }
\newcommand{\gep}{\varepsilon}       
\renewcommand{\tilde}{\widetilde}          
\DeclareMathSymbol{\leqslant}{\mathalpha}{AMSa}{"36} 
\DeclareMathSymbol{\geqslant}{\mathalpha}{AMSa}{"3E} 
\DeclareMathSymbol{\eset}{\mathalpha}{AMSb}{"3F}     
\newcommand{\dd}{\text{\rm d}}             
\newcommand{\R}{\mathbb{R}}
\newcommand{\Z}{\mathbb{Z}}
\newcommand{\N}{\mathbb{N}}
\def\bs{\boldsymbol}
\newcommand\bP{\ensuremath{\bs{\mathrm{P}}}}
\newcommand\bE{\ensuremath{\bs{\mathrm{E}}}}
\newcommand{\ind}{{\sf 1}}
\renewcommand{\epsilon}{\varepsilon}
\renewcommand{\theta}{\vartheta}
\renewcommand{\phi}{\varphi}
 \newcommand{\be}[1]{\begin{equation}\label{#1}}
 \newcommand{\ee}{\end{equation}}
 \newcommand{\bl}[1]{\begin{lemma}\label{#1}}
 \newcommand{\el}{\end{lemma}}
 \newcommand{\br}[1]{\begin{remark}\label{#1}}
 \newcommand{\er}{\end{remark}}
 \newcommand{\bt}[1]{\begin{theorem}\label{#1}}
 \newcommand{\et}{\end{theorem}}
 \newcommand{\bd}[1]{\begin{definition}\label{#1}}
 \newcommand{\ed}{\end{definition}}
 \newcommand{\bcl}[1]{\begin{claim}\label{#1}}
 \newcommand{\ecl}{\end{claim}}
 \newcommand{\bp}[1]{\begin{proposition}\label{#1}}
 \newcommand{\ep}{\end{proposition}}
 \newcommand{\bc}[1]{\begin{corollary}\label{#1}}
 \newcommand{\ec}{\end{corollary}}
 \newcommand{\bpr}{\begin{proof}}
 \newcommand{\epr}{\end{proof}}
 \newcommand{\bi}{\begin{itemize}}
 \newcommand{\ei}{\end{itemize}}
\definecolor{BL}{rgb}{0.22,0.45,0.70}
\begin{document}

\begin{frontmatter}
\title{Scaling limit of the uniform prudent walk}

\runtitle{Uniform prudent walk : scaling limits}

\begin{abstract}
We study the 2-dimensional uniform prudent self-avoiding walk, which assigns equal probability to all nearest-neighbor
self-avoiding paths of a fixed length that respect the prudent condition, namely, the path cannot take any step in the direction
of a previously visited site. The uniform prudent walk has been investigated with combinatorial techniques in \cite{B10}, while another
variant, the kinetic prudent walk has been analyzed in detail in \cite {BFV10}. In this paper, we prove that the
$2$-dimensional uniform prudent walk is ballistic and follows one of the $4$ diagonals with equal probability. We also establish a functional central limit theorem for the fluctuations of the path around the diagonal.
\end{abstract}

\author{\fnms{Nicolas} \snm{Pétrélis} \ead[label=e2]{nicolas.petrelis@univ-nantes.fr}}
\affiliation{Université de Nantes}

\author{\fnms{Rongfeng} \snm{Sun} \ead[label=e3]{matsr@nus.edu.sg}} \hspace{-4mm} \thanks{ R. Sun is supported by NUS grant R-146-000-220-112.}
\affiliation{NUS}

\author{\fnms{Niccolò}
  \snm{Torri}\corref{}\ead[label=e1]{niccolo.torri@univ-nantes.fr}
\ead[label=u1,url]{http://www.math.sciences.univ-nantes.fr/~torri/}}\hspace{-2mm} \thanks{N. Torri is supported by the \emph{``Investissements d'avenir"} program (ANR-11-LABX-0020-01).}
\affiliation{Université de Nantes}

\address{Laboratoire de Math\'ematiques Jean Leray UMR 6629\\
Universit\'e de Nantes, 2 Rue de la Houssini\`ere\\
BP 92208, F-44322 Nantes Cedex 03, France\\ \printead{e1}\\\printead{e2}
}

\address{National University of Singapore, NUS\\
 \printead{e3}
}

\runauthor{N. Pétrélis, R. Sun and  N. Torri}

\begin{keyword}[class=MSC]
\kwd[Primary ]{
82B41}
\kwd[; secondary ]{
60F05}
\kwd{
60K35}
\end{keyword}

\begin{keyword}
\kwd{Self-avoiding walk}
\kwd{Prudent walk}
\kwd{Scaling limits}
\end{keyword}

\end{frontmatter}


\section{Introduction}
The prudent walk was introduced in \cite{TD87b,TD87a} and \cite{SSK01} as a simplified version of the self-avoiding walk. It has attracted the attention of the combinatorics community in recent years, see e.g., \cite{B10,BI15,DG08}, and also the probability
community, see e.g. \cite{BFV10} and  \cite{PT16} .
\smallskip

In dimension $2$, for a given $L\in \N$, the set $\Omega_L$ of $L$-step prudent path on $\Z^2$ contains all nearest-neighbor self-avoiding path starting from the origin, which never take any step in the direction of a site already visited, i.e.,
\begin{align}\label{defPP}
\nonumber \Omega_L:=\big\{(\pi_i)_{i=0}^L\in (\Z^2)^{L+1}\colon\, &\pi_0=(0,0),  \pi_{i+1}-\pi_i\in \{\leftarrow,\rightarrow,\downarrow,\uparrow\}  \quad \forall  i\in \{0,\dots, L-1\}, \\
&  \big(\pi_i+\N (\pi_{i+1}-\pi_i)\big) \cap \pi_{[0,i]}=\emptyset \quad \,  \forall  i\in \{0,\dots, L-1\}\big\}
\end{align}
where $\pi_{[0,i]}$ is the range of $\pi$ at time $i$, i.e., $\pi_{[0,i]}=\{\pi_j: 0\leq j\leq i\}$.
\smallskip

Two natural laws can be considered on $\Omega_L$:
\begin{enumerate}
\item  The \emph{uniform} law  $\bP_{\text{unif},L}$, also referred to as the uniform prudent walk, under which at every path in $\Omega_L$ is assigned equal probability $1/|\Omega_L|$;

\item  The \emph{kinetic} law $\bP_{\text{kin},L}$, also referred to as the kinetic prudent walk, under which each step of the path is chosen uniformly among all the admissible steps. Note that the first step is in one of the $4$ directions with equal probability. Subsequently,
    if a step increases either the width or the height of its range, then the next step has $3$ admissible choices; otherwise there are only $2$ admissible choices. Let $\cH(\pi_{[0, L-1]})$ and $\cW(\pi_{[0, L-1]})$ denote the height and width of the range of $\pi_{[0, L-1]}$. Then, for $L\in \N$
and $\pi \in \Omega_L$, we note that
\begin{align}\label{linkpruki}
\bP_{\text{kin},L}(\pi)&=\tfrac{1}{4}\big(\tfrac{1}{2}\big)^{L-\cH(\pi_{[0, L-1]})-\cW(\pi_{[0, L-1]})}  \, \big(\tfrac{1}{3}\big)^{\cH(\pi_{[0, L-1]})+\cW(\pi_{[0, L-1]})}.
\end{align}
\end{enumerate}

\cite{BFV10} proved that the scaling limit of the kinetic prudent walk is given by
$Z_{u}=\int_{0}^{3u/7}\big( \sigma_1 \ind_{\{W_s\geq 0\}} {1 \choose 0} +\sigma_2 \ind_{\{W_s < 0\}} {0 \choose 1}\big)\dd s$, where $W$ is a Brownian motion and $\sigma_1,\sigma_2\in \{-1,1\}$ are random signs (independent of $W$), cf. \cite[Theorem 1]{BFV10}. 


\smallskip

In this paper, we identify rigorously the scaling limit of the 2-dimensional uniform prudent walk, proving a conjecture
raised in several papers, e.g., \cite[Section 5]{BFV10}, and \cite[Proposition 8]{B10} where partial answers were provided
for the \emph{2-sided} and \emph{3-sided} versions of the 2-dimensional prudent walk using combinatorial techniques. The conjecture, supported by numerical simulations, was that when space and time are rescaled by the length $L$, the 2-dimensional uniform prudent walk converges to a straight line in one of the 4 diagonal directions chosen with equal probability. This is in stark contrast to the kinetic prudent walk.
\smallskip



\section{Main results}
\begin{definition}\label{def-pi-tilde}
For every $\pi\in \Omega_L$, let $\tilde \pi^L:[0,1]\mapsto \R^2$ be the rescaled and interpolated version of $\pi$, i.e.,
$$\tilde \pi^L_t=\frac{1}{L} \big(\pi_{\lfloor t L\rfloor}+(tL-\lfloor tL\rfloor) (\pi_{\lfloor tL\rfloor+1}-\pi_{\lfloor t L\rfloor})\big), \quad t\in [0,1].$$
\end{definition}
We also denote  $\vec{e}_1:=(1,1)$,  $\vec{e}_2:=(-1,1)$,  $\vec{e}_3:=(-1,-1)$ and  $\vec{e}_4:=(1,-1)$.

\smallskip

Our first result shows that the scaling limit of the uniform prudent walk is a straight line segment.

\begin{theorem}[Concentration along the diagonals]\label{unifscal}
There exists a $c>0$ such that for every $\gep >0$
\be{concentr}
\lim_{L\to \infty} \bP_{{\rm unif},L}\bigg(\exists\, i \in \{1,\dots ,4\} \ s.t.\  \sup_{t\in [0,1]}\big|\tilde \pi^L_t-c t\, \vec{e}_i \big|\leq \gep \bigg)=1. 
\ee
\end{theorem}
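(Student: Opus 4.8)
\emph{Proof strategy.} The plan is to reduce the analysis of a general prudent walk under $\bP_{{\rm unif},L}$ to a one–dimensional renewal picture. The starting point is the elementary observation that the bounding box $B_i$ of $\pi_{[0,i]}$ is non–decreasing in $i$ and that the endpoint $\pi_i$ always lies on $\partial B_i$; hence each step can be labelled by which side of $B_i$ it \emph{extends}, or as a \emph{slide} parallel to a side. Grouping maximal runs of steps that extend a fixed pair of adjacent sides splits $\pi$, up to transitional pieces, into blocks of four possible types, associated with the four corners $\mathrm{NE},\mathrm{NW},\mathrm{SW},\mathrm{SE}$ of $B_L$ (a block of type $\mathrm{NE}$ being exactly a two–sided prudent walk, box growing only N and E, endpoint on the top or right side). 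I would set up the partition functions counting prudent walks with a prescribed corner–block structure, using the combinatorial framework of \cite{B10}.

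The heart of the matter is the following concentration statement: under $\bP_{{\rm unif},L}$, with probability $1-o(1)$ there is a single corner $i\in\{1,\dots,4\}$ such that $\pi$ differs from a two–sided prudent walk oriented along $\vec e_i$ only on a set of $o(L)$ steps, and this corner is each of the four with probability $\tfrac14+o(1)$ (the last point being immediate from the invariance of $\bP_{{\rm unif},L}$ under the dihedral group of $\Z^2$ fixing the origin). The mechanism is that performing a \emph{macroscopic} corner change, once the box has size $\Theta(L)$, forces the walk into an entropically subcritical configuration: the endpoint must cross an entire side of its box along a ``staircase corridor'', after which it is confined to a half–plane, and both being ``ready to turn'' on the one side and remaining confined on the other cost an exponential factor relative to the free growth rate; summing over the polynomially many positions and lengths of such a corridor still leaves an exponential deficit. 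A separate argument, iterating the two–corner estimate, rules out a diverging number of shorter corner switches and a non–negligible preamble, so that $\pi$ is, up to $o(L)$ corrections, a single two–sided prudent walk.

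It then remains to prove \eqref{concentr} for a uniform two–sided prudent walk, say of $\mathrm{NE}$ type — a case close to what is already known, cf.\ \cite{B10}. Two–sided prudent walks have an explicit, essentially one–dimensional structure: decomposing at the successive times the endpoint meets the NE corner of its current box exhibits them as a renewal sequence of i.i.d.\ irreducible blocks, and at the singularity selected by the uniform law the block lengths have exponentially decaying tails, so a length–$L$ walk has $\Theta(L)$ blocks. Each block displaces the NE corner by a random vector with all exponential moments and, by the reflection across the main diagonal (which fixes the $\mathrm{NE}$ sector and the uniform law), with mean $c\,(1,1)$ for some $c>0$. A functional law of large numbers then gives $\pi_{\lfloor tL\rfloor}/L\to c\,t\,\vec e_1$ uniformly in $t$, in probability, and since the endpoint stays within $O(\log L)$ of the NE corner with high probability, $\tilde\pi^L$ stays uniformly within $o(1)$ of $t\mapsto c\,t\,\vec e_1$; the four symmetric cases give \eqref{concentr}.

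The main obstacle is the combinatorial estimate of the second paragraph. One must make rigorous what a corner change costs: that the transitional corridor is genuinely forced to be directed, or confined, over a length comparable to the current box size; that the relevant confined growth rates (half–plane, quadrant) are strictly below the free one, with enough room to dominate the polynomial entropy coming from the positions and lengths of the transitions; and that the transitional configurations (corner slides, the ``ready–to–turn'' endpoint states) can be enumerated without consuming that exponential gain — and then upgrade the two–corner bound to control an arbitrary number of switches. A secondary technical point is establishing the renewal decomposition of two–sided walks, with exponential tails on the block lengths, sharply enough to yield sup–norm (not merely one–time–marginal) concentration; this is where the explicit generating function of \cite{B10} does the work.
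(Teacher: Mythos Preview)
Your overall architecture matches the paper's: reduce to the two-sided case by showing that range-crossing excursions (your ``corner changes'') are negligible, then handle the two-sided walk by a renewal/LLN argument. Your treatment of the two-sided walk is essentially what the paper does in Section~\ref{tspw}, via an alternating horizontal/vertical excursion decomposition and an exponentially tilted excursion law $\bP^*$ under which block lengths have exponential tails.

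Where you and the paper part ways is the mechanism for ruling out corner changes. Your framing --- ``confined growth rates (half-plane, quadrant) strictly below the free one'', the walk ``confined to a half-plane'' after a crossing --- is not how the cost actually arises: after a crossing the walk is \emph{not} confined, and the paper invokes no growth-rate gap between different prudent classes. Instead, each excursion in its current slab of width $R_{i-1}$ is encoded as an effective one-dimensional random walk (following \cite{BFV10}), and the key observation is simply that a crossing at the $i$-th excursion forces its length $T_i \geq 1+R_{i-1} \geq i/2$. Since under $\bP^*$ the excursion length has exponential tail, crossings past the first $\delta\log L$ excursions have vanishing probability, and those first $\delta\log L$ excursions occupy only $O((\log L)^2)$ steps (Lemmas~\ref{finiteexc}--\ref{lastExc}). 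The genuine technical work --- what you correctly flag as the main obstacle --- is controlling the Radon--Nikodym product that arises when $\bP_{{\rm unif},L}$ is rewritten in terms of the i.i.d.\ excursion law $\bP^*$ (equation~\eqref{probeve}): this is handled by a truncation coupling of the slab-constrained excursion law $\bP^*_R$ with $\bP^*$, the pointwise ratio bounds of Lemmas~\ref{Cbonud}--\ref{Cbonudlast}, and the product inequality of Lemma~\ref{lemmaKey}. That machinery replaces your growth-rate heuristic with a direct probabilistic estimate; your heuristic points at the right conclusion but, as stated, would not furnish a proof --- in particular the ``half-plane confinement'' picture is not what is happening, and the early-phase control (many cheap short crossings while the box is small) is exactly what the $R_{i-1}\geq (i-1)/2$ bound combined with the exponential tail handles cleanly, whereas an iterated two-corner estimate would be awkward here.
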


Furthermore, we can identify the fluctuation of the prudent walk around the diagonal. More precisely, let $\sigma_L=1, 2, 3, 4$, depending
on whether $\tilde \pi^L_1$ lies in the interior of the 1st, 2nd, 3rd, or the 4th quadrant, and let $\sigma_L=0$ otherwise. Then we have

\begin{theorem}[Fluctuations around the diagonal]\label{unifscal2}
Under $\bP_{{\rm unif},L}$, the law of $\sigma_L$ converges to the uniform distribution on $\{1, 2, 3, 4\}$, and
\be{concentrclt}
\big(\sqrt L(\tilde \pi^L_t-ct \vec{e}_{\sigma_L})\big)_{t\in [0,1]} \Rightarrow (B_t)_{t\in [0,1]} \qquad \mbox{as } L\to\infty,
\ee
where $\Rightarrow$ denotes weak convergence, and $(B_t)_{t\geq 0}$ is a two-dimensional Brownian motion with a non-degenerate covariance matrix, cf.\ \eqref{covar}.
\end{theorem}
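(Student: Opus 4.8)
The plan is to reduce the law of $\sigma_L$ and the fluctuation statement to a single diagonal by symmetry, to set up a renewal (regeneration) representation of the walk conditioned to follow the diagonal $\vec e_1$, and then to run a renewal--reward invariance principle. First, the quarter-turn $r:(x,y)\mapsto(-y,x)$ is a length-preserving bijection of $\Omega_L$ that preserves $\bP_{{\rm unif},L}$ and sends $\vec e_i$ to $\vec e_{i+1}$ (indices mod $4$), hence cyclically permutes the events $\{\sigma_L=1\},\dots,\{\sigma_L=4\}$; so $\bP_{{\rm unif},L}(\sigma_L=i)$ is the same for all $i$. Taking $\gep<c/4$ in Theorem~\ref{unifscal}, the event occurring there determines $i$ uniquely and forces $\tilde\pi^L_1$ into the open $i$-th quadrant, so $\sigma_L=i\neq0$ on that event; as it has probability $\to1$ we get $\bP_{{\rm unif},L}(\sigma_L=0)\to0$, whence $\bP_{{\rm unif},L}(\sigma_L=i)\to\tfrac14$. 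By the same symmetry, proving \eqref{concentrclt} reduces to showing that, conditionally on $\{\sigma_L=1\}$ (and on the high-probability event of Theorem~\ref{unifscal}), $\big(\sqrt L(\tilde\pi^L_t-ct\,\vec e_1)\big)_{t\in[0,1]}$ converges weakly to a two-dimensional Brownian motion with the non-degenerate covariance \eqref{covar}; I work under this conditioning from now on.

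\textbf{Renewal decomposition.} The core is a regeneration structure. Write $\pi_i=(X_i,Y_i)$, $D_i:=X_i+Y_i$, and let $0=\tau_0<\tau_1<\dots$ be the successive times at which $D$ attains a new maximum, so that $D_{\tau_k}=k$ and $D$ increases by exactly $1$ over each block $b_k:=(\pi_{\tau_{k-1}},\dots,\pi_{\tau_k})$. Combining the combinatorial description of prudent walks of \cite{B10} with the estimates behind Theorem~\ref{unifscal}, I would show that with probability $1-o(1)$ the conditioned walk is two-sided and regenerates at the times $\tau_k$: the blocks $b_k$ are, under the conditioned uniform law, i.i.d.\ ``excursions'' with a critical-fugacity law, run until the $L$ steps are used up (the initial and final incomplete pieces being tight). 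Each block carries a length $\ell_k:=\tau_k-\tau_{k-1}\ge1$ and an $x$-displacement $\Delta_k:=X_{\tau_k}-X_{\tau_{k-1}}$, with $y$-displacement $1-\Delta_k$ since the $D$-increment is $1$. From the generating functions of \cite{B10} via singularity analysis (the relevant excursion series has a dominant singularity of square-root/simple-pole type and is finite there in the ballistic regime) one extracts the exponential tail of $\ell_1$, hence $\bar\mu:=\e[\ell_1]<\infty$; the reflection $(x,y)\mapsto(y,x)$, which fixes $D$ and exchanges $\Delta_k\leftrightarrow1-\Delta_k$, gives $\e[\Delta_1]=\tfrac12$; the number of complete blocks in $L$ steps is $\Theta(L)$ with high probability; and $c=\tfrac1{2\bar\mu}$, consistent with Theorem~\ref{unifscal}.

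\textbf{Invariance principle and covariance.} The exponential tail gives $(\ell_1,\Delta_1)$ finite exponential moments, hence a two-dimensional lattice local CLT for the block sums; this is what lets me treat the conditioned uniform law as a genuine renewal--reward process observed on the time interval $[0,L]$ --- the only endpoint constraint being the soft one $\sigma_L=1$ --- with no residual bridge, since ``conditioning on total length $=L$'' here coincides with ``running the i.i.d.\ blocks until the $L$-th step''. The functional CLT for renewal--reward processes (Donsker's invariance principle, Anscombe's theorem to pass from the block index to the time index $i=\lfloor tL\rfloor$, tightness from the exponential moment bound) then yields
\[
\big(\sqrt L\,(\tilde\pi^L_t-ct\,\vec e_1)\big)_{t\in[0,1]}\ \Rightarrow\ (B_t)_{t\in[0,1]},
\]
with $B$ a centred two-dimensional Brownian motion, $\Cov(B_t)=t\Sigma$ and $\Sigma=\tfrac1{\bar\mu}\Cov\!\big((\Delta_1,1-\Delta_1)-c\,\ell_1\vec e_1\big)$, which is the matrix \eqref{covar}. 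The reflection symmetry forces $\Cov(\Delta_1,\ell_1)=0$, so $\Sigma$ is symmetric with eigenvectors $\vec e_1,\vec e_2$ and eigenvalues $\tfrac{2c^2}{\bar\mu}\mathrm{Var}(\ell_1)>0$ (along the diagonal) and $\tfrac{2}{\bar\mu}\mathrm{Var}(\Delta_1)>0$ (transverse): $\Sigma$ is non-degenerate, and not a multiple of the identity in general.

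\textbf{Main difficulty.} I expect essentially all the work to sit in the renewal decomposition: constructing the regeneration times so that the blocks are genuinely i.i.d.\ (being at a $D$-record is not by itself enough --- one also needs the active corner and its two incident edges to be ``fresh'', so that the walk's future depends on the past only through the current position), extracting the exponential tail of $\ell_1$ from the prudent-walk generating functions (which are algebraic but not transparent), and, above all, showing that the exceptional events --- non-two-sided excursions, anomalously long blocks, the two incomplete end pieces --- have probability $o(1)$ and contribute $o(\sqrt L)$ to the displacement. This amounts to a $\sqrt L$-scale quantitative strengthening of Theorem~\ref{unifscal}. Granting that, the passage to \eqref{concentrclt} is a standard invariance-principle argument and the explicit form of $\Sigma$ follows from the first two moments of $(\ell_1,\Delta_1)$.
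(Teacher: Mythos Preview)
Your symmetry argument for $\sigma_L$ is correct, and the high-level architecture---reduce to one diagonal, show the non-two-sided piece is $o(\sqrt L)$, then run a renewal--reward invariance principle---is also the paper's. But two concrete ingredients differ, and one claim is wrong. Your regeneration times (records of $D=X+Y$) are not regeneration times, as you concede: at a $D$-record the walk need not sit at the NE corner of its range (e.g.\ $(0,0)\!\to\!(1,0)\!\to\!(1,-1)\!\to\!(2,-1)\!\to\!(3,-1)$ has a $D$-record at $(3,-1)$, where the corner is $(3,0)$), so the future depends on the range and the blocks are not i.i.d. The paper's decomposition (Section~\ref{decexcu}) instead cuts the two-sided walk into alternating \emph{horizontal} and \emph{vertical} excursions, each ending exactly at the NE corner; under the tilted law $\bP^*$ of \eqref{defPstar} these are i.i.d., in bijection with non-negative excursions of a one-dimensional effective random walk with geometric increments, and the exponential tail of the block length follows probabilistically (Lemma~\ref{lemma:Klamba}, Remark~\ref{remKK})---no singularity analysis of generating functions is used. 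Your ``no residual bridge'' claim is also wrong: by \eqref{P+unif} the uniform law on $\Omega_L^+$ is $\bP^*$ \emph{conditioned on the renewal process hitting $L$}; the paper removes this conditioning via the renewal theorem at the start of the proof of Theorems~\ref{scalingtwosided}--\ref{scalingtwosided2}, and the invariance principle then comes from pairing one horizontal and one vertical excursion into a single $\Z^2$-increment as in \eqref{Xn12}, giving the covariance \eqref{covar}.

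The reduction from the full prudent walk to the two-sided one---which you rightly flag as the hard part but leave entirely unspecified---is the content of Section~\ref{pw}: a second excursion decomposition (now within the walk's current range, Section~\ref{decppe}), the Radon--Nikodym representation \eqref{probeve}, and explicit path-surgery comparisons (Lemmas~\ref{Cbonud} and~\ref{Cbonudlast}) yield that the walk crosses its range at most $O(\log L)$ times, with total length $O((\log L)^2)$ in those crossings and in the final incomplete excursion (Lemmas~\ref{finiteexc}--\ref{lastExc}). This is $o(\sqrt L)$, so Theorems~\ref{unifscal} and~\ref{unifscal2} follow from their two-sided counterparts. This is not a ``quantitative strengthening of Theorem~\ref{unifscal}'' but a separate combinatorial-probabilistic argument that your proposal does not anticipate.
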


The proof of Theorem \ref{unifscal} follows the strategy used by \cite{BFV10}.
We consider the so called \emph{uniform 2-sided prudent walk} (cf. Section \ref{tspw}), a sub-family of prudent walks with a fixed diagonal direction.
First we prove that the scaling limit of the uniform 2-sided prudent walk is a straight line, cf. Theorem \ref{scalingtwosided}. 
A weaker version of this result
was already proven by \cite[Proposition 6]{B10}. We reinforce it by using an alternative probabilistic approach.
We decompose a path into a sequence of excursions, which leads to an \emph{effective} one-dimensional random walk with geometrical increments, see e.g., Figure \ref{fig1}.
Then we show that under the uniform measure, a typical path of length $L$ crosses its range from one end to the other at most $\log L$ times
and the total length of the first $\log L$ excursions also grows at most logarithmically in $L$. This results refines the upper bound obtained by \cite{PT16}. The excursions crossing the range of the walk disappear in the scaling limit, while the remaining part of the path is nothing but a uniform 2-sided prudent walk (in one of the four diagonal directions), for which we have identified the correct scaling limit.
\smallskip

Theorem \ref{unifscal2} can be proved using the same strategy. Once it is shown to hold for the 2-sided uniform prudent walk, cf. Theorem \ref{scalingtwosided2}, then it also holds for the uniform prudent walk thanks to control on the number of excursions crossing the range of the walk.

\subsection{Organization of the paper}
The article is organized as follows: In Section \ref{tspw}, we introduce the uniform 2-sided prudent walk and identify its scaling limit.
In Section \ref{pw}, we analyze the uniform prudent walk and prove some technical results needed to control the excursions crossing
the range of the walk. Lastly, we prove our main results Theorems \ref{unifscal} and \ref{unifscal2}  in Section \ref{sec5}.

\section{Uniform 2-sided prudent walk}\label{tspw}

Let $\Omega_L^{+}$ be the subset of $\Omega_L$ containing the so called \emph{2-sided} prudent path (in the north-east direction), that is, those paths $\pi\in \Omega_L$  satisfying three additional geometric constraints:
\begin{enumerate}
\item $\pi$ can not take any step in the direction of any site in the quadrant $(-\infty, 0]^2$;
%
\item The endpoint $\pi_L$ is located at the top-right corner of the smallest rectangle containing $\pi$;
\item $\pi$ starts
with an east step ($\rightarrow$), i.e., $\pi_1=(1,0)$.
\end{enumerate}
We denote by
$ \bP_{\text{unif},L}^+$ the uniform measure on $\Omega_L^+$.
Theorems \ref{scalingtwosided} and \ref{scalingtwosided2} below are the counterparts of Theorems \ref{unifscal} and \ref{unifscal2} for the uniform 2-sided prudent walk. Recall that $\vec e_1=(1,1)$.

\begin{theorem}
\label{scalingtwosided}
There exists a $c>0$ such that for every $\gep > 0$,
\be{concentr2sided}
\lim_{L\to \infty} \bP_{{\rm unif},L}^+\bigg(\sup_{t\in [0,1]}\big|\tilde \pi^L_t-c t \, \vec{e}_1 \big|\le \gep \bigg)=1.
\ee
\end{theorem}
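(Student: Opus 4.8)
The plan is to reduce the 2-sided prudent walk to an effective one-dimensional random walk via an excursion decomposition, then apply a renewal/law-of-large-numbers argument to obtain ballisticity, and finally use Donsker-type control on the small residual part to deduce the uniform sup-norm concentration.

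First I would set up the combinatorial structure. A 2-sided prudent path in the north-east direction is determined by a sequence of ``corners'': at each stage the walk is at the top-right corner of its bounding box and chooses to extend either to the north or to the east, then travels some distance in that direction before (possibly) turning. The prudent and 2-sided constraints mean that the only freedom is: which of the two directions (N or E) to go at each turn, and how far. Crucially, after a turn in (say) the east direction, the walk may either keep going east until it reaches a new bounding-box corner, or it may retreat partway — but the 2-sided condition forces the endpoint to sit at the top-right corner, which constrains the last excursion. The key point I want to extract: decomposing $\pi$ into its successive horizontal and vertical ``excursions'' produces a pair of integer sequences whose joint law under $\bP_{\text{unif},L}^+$ is, up to the global length constraint $\sum(\text{lengths}) = L$, a product of geometric-type distributions. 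This is the ``effective one-dimensional random walk with geometrical increments'' referred to in the text (Figure~\ref{fig1}).

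Second, I would compute the generating function $\sum_L |\Omega_L^+| x^L$ explicitly (this is essentially available from \cite{B10}), locate its dominant singularity $x_c$, and read off that a typical path has: (i) a number of turns that is $\Theta(L)$ with Gaussian fluctuations, (ii) horizontal progress and vertical progress each concentrated around $L/2$ (by the N/E symmetry of the 2-sided walk, after conditioning), and (iii) individual excursion lengths that are $O(\log L)$ with high probability. From a renewal-theory / local-limit-theorem perspective, conditioning the effective random walk to have total length exactly $L$ is a standard bridge-type conditioning, and the empirical measure of increments converges to the tilted geometric law; Cramér-type large deviations give that $\sup_t |\tilde\pi_t^L - ct\,\vec e_1|$ is small with probability $\to 1$, where $c = \sqrt 2\,\mathbb E[\text{horizontal step}]/\mathbb E[\text{total step length}]$ — the constant is $c>0$ because the tilted geometric mean is finite and positive. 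Concretely: write $\tilde\pi^L_t$ in terms of partial sums of the excursion sequence, use the LLN for the (conditioned) sequence plus a maximal inequality to upgrade pointwise-in-$t$ concentration to uniform-in-$t$ concentration.

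**The main obstacle** will be handling the conditioning on the total length $L$ rigorously: the excursion lengths are not i.i.d. (there is dependence through the alternation of horizontal/vertical moves and through the ``crossing'' excursions that traverse the current bounding box), and the global constraint $\sum = L$ correlates them. The cleanest route is to show the generating function factorizes enough that conditioning reduces to a classic renewal bridge, then invoke a quenched/annealed local CLT to transfer LLN statements from the unconditioned tilted walk to the conditioned one; this is where I expect the bulk of the technical work, and it is exactly the place where the paper says it ``refines the upper bound obtained by \cite{PT16}'' on excursion lengths. Once ballisticity of the mean direction is established, the only remaining subtlety is that the walk could in principle spend a non-negligible fraction of its length in long detours — ruling this out is precisely the $O(\log L)$ bound on the total length of crossing excursions, which follows from the exponential tail of the geometric increments together with a union bound over the $O(L)$ excursions. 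Assembling these, \eqref{concentr2sided} follows with the explicit $c$.
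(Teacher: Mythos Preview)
Your high-level strategy---excursion decomposition, an effective one-dimensional random walk, and a renewal/LLN argument under an exponential tilt---is exactly the paper's approach and is correct in outline. However, your elaboration of the ``main obstacle'' conflates the 2-sided walk with the full prudent walk and imports difficulties that do not exist here. For $\Omega_L^+$ there are \emph{no} ``crossing excursions'': the decomposition of Section~\ref{decexcu} yields excursions that, under the tilted law $\bP^*$ built from the $\lambda^*$ of Lemma~\ref{lemma:Klamba}, are genuinely i.i.d.\ with a common exponential-tail law $K^*$ (Remark~\ref{remKK}). The alternation between horizontal and vertical excursions introduces no dependence, since the two types are identically distributed modulo rotation. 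Consequently the conditioning $\sum_i \tilde T_i = L$ is a \emph{classical} i.i.d.\ renewal bridge, dispatched directly by the renewal theorem (the total-variation argument in the opening paragraph of the proof in Section~3.4); the quenched/annealed local-CLT machinery you anticipate is unnecessary. The ``$O(\log L)$ bound on the total length of crossing excursions'' you invoke is the content of Lemmas~\ref{finiteexc}--\ref{finitesteps}, which concern only the full prudent walk (Theorem~\ref{unifscal}); it plays no role in the proof of Theorem~\ref{scalingtwosided}.

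Two further points. First, your claim that horizontal and vertical progress each concentrate near $L/2$ is off: the correct speed is $c = \bE^*[\tilde N_1]/(2\bE^*[\tilde T_1]) < 1/2$, since within each excursion some steps are spent on vertical (resp.\ horizontal) detours that cancel. Second, you do not mention the one genuine subtlety the paper does need: the renewal argument only gives the LLN on $[0,\tilde t_0]$ for $\tilde t_0<1$, because the conditioning biases the last few excursions. The paper closes this gap by a time-reversal symmetry (the last $\epsilon L$ steps of a path in $\Omega_L^+$ have, modulo rotation and translation, the same law as the first $\epsilon L$ steps), which lets one send $\tilde t_0\uparrow 1$. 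This is short but essential, and your sketch omits it.
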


\begin{theorem}
\label{scalingtwosided2}
Under $\bP^+_{{\rm unif},L}$,
\be{concentr2sidedclt}
\big(\sqrt L(\tilde \pi^L_t-ct \vec{e}_1)\big)_{t\in [0,1]} \Rightarrow (B_t)_{t\in [0,1]} \qquad \mbox{as } L\to\infty.
\ee
where $B$ is the same two-dimensional Brownian motion as in Theorem \ref{unifscal2}.
\end{theorem}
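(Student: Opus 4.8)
The plan is to push the excursion decomposition of Section~\ref{tspw} — the one behind Theorem~\ref{scalingtwosided} — one order further: there it yields only a law of large numbers for the excursion data, whereas the path's fluctuations will come from a functional CLT for the associated partial sums. Recall that a path $\pi\in\Omega_L^{+}$ is coded by a finite word of excursions $\omega_1,\dots,\omega_N$, each carrying a type (horizontal or vertical), a length $\ell(\omega_k)\in\N$ and a net displacement $(\xi_k^{(1)},\xi_k^{(2)})\in\Z^2$, with $\sum_{k=1}^N\ell(\omega_k)=L$, and that, read on this code, $\bP_{{\rm unif},L}^{+}$ is — up to boundary effects at the first and last excursions, negligible in the limit — the law of an i.i.d.\ sequence $(\omega_k)_{k\ge1}$ of some law $\mu$ with exponential tails, invariant under the reflection across $\{y=x\}$, conditioned on the renewal event $\{L\in\{T_1,T_2,\dots\}\}$ with $T_n:=\ell(\omega_1)+\dots+\ell(\omega_n)$. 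Set $\bar\ell:=\bE_\mu[\ell(\omega_1)]$; by renewal theory and Theorem~\ref{scalingtwosided}, $N/L\to 1/\bar\ell$ in probability and $\bE_\mu[(\xi_1^{(1)},\xi_1^{(2)})]=(c\bar\ell,c\bar\ell)$ with the same constant $c$.

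The first reduction replaces the step-indexed path by the excursion-indexed partial sums $S_n:=\sum_{k\le n}(\xi_k^{(1)},\xi_k^{(2)})=\pi_{T_n}$. Over a single excursion the path moves by at most its length, so $\sup_{t\in[0,1]}\bigl|\pi_{\lfloor tL\rfloor}-S_{n(\lfloor tL\rfloor)}\bigr|\le\max_{k\le N}\ell(\omega_k)$, where $n(m):=\#\{k:T_k\le m\}$; by the exponential tails of $\mu$, $N\le L$ and a union bound, $\max_{k\le N}\ell(\omega_k)=O(\log L)=o(\sqrt L)$ with probability tending to $1$ (the excursion-length control refining \cite{PT16}). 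Hence it suffices to prove
$$\Bigl(L^{-1/2}\bigl(S_{n(\lfloor tL\rfloor)}-ctL\,\vec{e}_1\bigr)\Bigr)_{t\in[0,1]}\ \Rightarrow\ (B_t)_{t\in[0,1]}.$$

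The second reduction disposes of the conditioning on finite prefixes. By the renewal (Markov) property, the density of the conditioned law with respect to the i.i.d.\ law on the $\sigma$-field $\cF_n$ generated by $\omega_1,\dots,\omega_n$ equals $u(L-T_n)/u(L)$, where $u(m):=\bP^{\rm iid}(m\in\{T_k\})$; as the length law is aperiodic, $u(m)\to 1/\bar\ell>0$, so this density tends to $1$ whenever $T_n\ll L$, in particular for every excursion index contributing to $\pi_{\lfloor tL\rfloor}$ while $t$ stays in a compact subset of $[0,1)$. On such windows the finite-dimensional laws of $L^{-1/2}(S_{n(\lfloor tL\rfloor)}-ctL\,\vec{e}_1)$ therefore converge to those computed from the \emph{unconditioned} i.i.d.\ sequence, for which the functional CLT for vector-valued renewal--reward processes gives a centred Gaussian limit. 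In the diagonal/anti-diagonal frame $(x+y,\,x-y)$, the reflection symmetry of $\mu$ makes the covariance \eqref{covar} of this limit diagonal — its transverse component is the scaling limit of the effective one-dimensional walk $\sum_k(\xi_k^{(1)}-\xi_k^{(2)})$ alluded to in the introduction, centred by the symmetry, and the cross term vanishes by the same symmetry — and positive-definite, since $\xi_1^{(1)}-\xi_1^{(2)}$ and $\xi_1^{(1)}+\xi_1^{(2)}-2c\,\ell(\omega_1)$ are non-degenerate under $\mu$; this is the Brownian motion $(B_t)$ of Theorem~\ref{unifscal2}.

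It remains to upgrade the finite-dimensional convergence on $[0,1-\gep]$, valid for every $\gep>0$, to weak convergence on $C([0,1],\R^2)$. This follows from tightness of $\{\tilde\pi^{L}\}$ under $\bP_{{\rm unif},L}^{+}$, obtained from moment bounds for i.i.d.\ sums with exponential tails (uniform in $L$ because the conditioning costs only an $O(1)$ factor) and the $O(\log L)$ bound on the longest excursion; the limit is the \emph{free} Brownian motion $(B_t)_{t\in[0,1]}$, not a bridge, precisely because the conditioning pins the time coordinate $T_N=L$ but leaves the spatial endpoint $\pi_L$ unconstrained. I expect the real work to lie in two places: establishing the $O(\log L)$ bound on the longest excursion under $\bP_{{\rm unif},L}^{+}$ (sharpening \cite{PT16}), and controlling the conditioned measure uniformly in $t$ up to $t=1$, where $u(L-T_n)/u(L)$ is no longer close to $1$; the renewal--reward CLT and the evaluation of \eqref{covar} are then routine.
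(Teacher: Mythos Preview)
Your proposal is correct and follows essentially the paper's route: the excursion decomposition, the i.i.d.\ representation \eqref{P+unif} conditioned on a renewal event, the renewal theorem to remove the conditioning on $[0,1-\gep]$, Donsker for the excursion-indexed sums (the paper's $X_n$ in \eqref{Xn12}, which groups horizontal/vertical excursions in pairs rather than treating them as a single symmetric law $\mu$), and $\max_i \tilde T_i=o(\sqrt L)$ (the paper's \eqref{Lmax}) to pass back to step-indexing. The two points you flag as ``real work'' are lighter than you expect: the longest-excursion bound is just a union bound on i.i.d.\ exponential tails (Remark~\ref{remKK}) with the conditioning costing only the bounded factor $1/\bP^*(L\in\tilde\tau)$ --- your reference to \cite{PT16} is misplaced, as that concerns the full prudent walk --- and for the extension to $t=1$ the paper does not use tightness but the \emph{time-reversal symmetry} of $\bP^{+}_{{\rm unif},L}$ (the reversed path, modulo rotation and translation, again has law $\bP^{+}_{{\rm unif},L}$), so the last $\gep L$ steps fluctuate like the first $\gep L$ and are negligible as $\gep\downarrow 0$.
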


\subsection{Decomposition of a 2-sided prudent path into excursions}\label{decexcu}

Every path $\pi\in \Omega_L^+$ can be decomposed in a unique manner into a sequence of horizontal and vertical excursions (see Figure \ref{fig1}).
First we introduce some notation. For $\pi\in \Omega_L^+$ and $i\leq L$, denote $\pi_i=(\pi_{i,1}, \pi_{i, 2})$.
Let $\tau_0:=0$ and 
\begin{align}
&\tau_1(\pi) :=\min\{ i>0 \, :\, \pi_{i,2}>0\}-1,
&\tau_2(\pi) :=\min\{ i>\tau_1 \, :\, \pi_{i,1}>\pi_{ \tau_{1,1}}\}-1,
\end{align}
which are the times when the first horizontal, resp.\, vertical excursion ends.
For $k\in \N$, define
\begin{align*}
&\tau_{2k+1}(\pi) :=\inf\{ i>\tau_{2k}\, :\, \pi_{i,2}>\pi_{ \tau_{2k,2}}\}-1,
& \tau_{2k+2}(\pi) :=\inf\{ i>\tau_{2k+1}\, :\, \pi_{i,1}>\pi_{ \tau_{2k+1,1}}\}-1.
\end{align*}
Let $\gamma_L(\pi):=\min\{j\geq 1\colon \, \tau_j(\pi) =\infty \}$ be the number of excursions in $\pi$. Note that each horizontal excursion starts with an east step, and each vertical excursion a north step. Since the endpoint $\pi_L$ lies at the top-right corner of the smallest rectangle containing $\pi$, the last excursion of $\pi$ can be made complete by adding an extra north step if it is a horizontal excursion, or adding an extra east step if it is a vertical excursion. Therefore, with a slight abuse of notation, we redefine $\tau_{\gamma_L}:=L$. We can thus decompose $\pi$ into the excursions $\big((\pi_{\tau_{k-1}},\dots,\pi_{\tau_{k}})\big)_{k=1}^{\gamma_L}$, which are horizontal for odd $k$ and vertical for even $k$.

\subsection{Effective random walk excursion} \label{sec:ERW}

Let $\cI_t$ denote the set of horizontal excursions of length $t$, flipped above the $x$-axis, i.e.,
\be{defIt}
\cI_t:=\big\{\pi=(\pi_0,\pi_1,\dots,\pi_t)\colon \pi_0=(0,0),\, \pi_1=(1,0),\ \pi_{i, 2} \ge 0\  \forall\, i \in\{1,\dots, t\},\, \pi_{t,2}= 0 \big\}.
\ee
Recall from Section \ref{decexcu} that each path  $\pi\in \Omega_L^+$ can be decomposed uniquely into $\gamma_L(\pi)$ excursions of length $\tau_i-\tau_{i-1}$, $i=1,\dots,\tau_L(\pi)$. These excursions are alternatingly horizontal and vertical, with the first excursion being horizontal, see Figure \ref{fig1}. We can thus partition $\Omega_L^+$ according to the value of $r:=\gamma_L(\pi)$ and the excursion lengths $t_1,\dots,t_r$. 
Defining 
\begin{equation}\label{defKbis}
K(t):=\frac{1}{2^t}\big | \cI_{t}\big |,
\end{equation}
we have that
\begin{equation}\label{two-sided1}
\frac{1}{2^{L}}\, |\Omega_L^{+}|=\sum_{r\geq 1} \sum_{t_1+\dots+t_r=L} \prod_{i=1}^r \Big | \cI_{t_i}\Big |\,  \frac{1}{2^{t_i}}
=\sum_{r\geq 1} \sum_{t_1+\dots+t_r=L} \prod_{i=1}^r K(t_i).
\end{equation}

We now follow the idea introduced in \cite{BFV10} and rewrite \eqref{defKbis} in terms of a one-dimensional \emph{effective random walk} $V=(V_i)_{i=0}^\infty$. The walk $V$ starts from $0$, has law  $\mathbf{P}$, and its increments
$(U_i)_{i=0}^\infty$ are i.i.d. and follow a discrete Laplace distribution, i.e.,
\begin{equation}\label{lawP}
\mathbf{P}(U_1 = x)=\frac{1}{3}\,  \frac{1}{2^{|x|}}, \quad x\in \Z.
\end{equation}

\begin{lemma}
Given the walk $V$ and $t\in \N$, let $\eta_t:=\min\big\{i\geq 1\colon \, i+\sum_{j=1}^{i} |U_j| \geq t\big\}$, then
\be{defK}
K(t)=\mathbf{E}\Big[e^{\log(\frac{3}{2})\, \eta_t} \,  \ind_{\{V_i\geq 0\ \forall i\leq \eta_t, \, V_{\eta_t=0}, \ \eta_t+\sum_{j=1}^{\eta_t} |U_j|=t\}}\Big].
\ee
\end{lemma}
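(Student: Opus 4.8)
The plan is to reduce \eqref{defK} to a bijective enumeration followed by an elementary reweighting. Since $K(t)=2^{-t}|\cI_t|$ by \eqref{defKbis}, it is enough to count $|\cI_t|$ and then to recognise the count as the stated expectation. I would introduce the set of \emph{admissible increment sequences}
\[
\cS_t:=\Big\{(u_1,\dots,u_n):\ n\ge1,\ u_i\in\Z,\ \textstyle\sum_{i=1}^{j}u_i\ge0\ \ \forall\,1\le j\le n,\ \ \sum_{i=1}^{n}u_i=0,\ \ \sum_{i=1}^{n}(|u_i|+1)=t\Big\},
\]
and the first step is to prove $|\cI_t|=|\cS_t|$.

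For this I would use the structure of a horizontal excursion coming from the decomposition of Section \ref{decexcu}: a horizontal excursion, flipped above the $x$-axis, never takes a west step, and therefore factors in a unique way into consecutive \emph{pieces}, the $j$-th piece consisting of one east step followed by a maximal vertical run --- which is monotone, by self-avoidance. To $\pi\in\cI_t$ I associate $\Phi(\pi)=(u_1,\dots,u_n)$, where $n$ is the number of pieces and $u_j\in\Z$ is the signed height change of the $j$-th piece (positive for a northward run, negative for a southward run, $0$ for an empty run). After $j$ pieces the walk sits at $\big(j,\sum_{i\le j}u_i\big)$, and inside the $j$-th piece the second coordinate moves monotonically between $\sum_{i<j}u_i$ and $\sum_{i\le j}u_i$; hence ``$\pi_{i,2}\ge0$ for all $i$'' is equivalent to ``all partial sums $\sum_{i\le j}u_i$ are $\ge0$'', ``$\pi_{t,2}=0$'' is equivalent to ``$\sum_{i\le n}u_i=0$'', and the length of $\pi$, being $\sum_{j=1}^{n}(1+|u_j|)$, equals $t$. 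So $\Phi$ sends $\cI_t$ into $\cS_t$. Conversely, given a sequence in $\cS_t$ I would concatenate the corresponding pieces from the origin and check that the resulting path lies in $\cI_t$: it starts with the east step to $(1,0)$, stays at $y\ge0$ (partial sums $\ge0$), returns to $y=0$ at time $t$, is self-avoiding (the first coordinate strictly increases from one piece to the next, and is constant with monotone second coordinate within a piece), and is prudent (an east step enters a column of strictly larger, hence unvisited, abscissa; a vertical step moves away from the already-visited part of the current column). This map is visibly inverse to $\Phi$, so $\Phi$ is a bijection and $|\cI_t|=|\cS_t|$.

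The reweighting is then straightforward. For each $(u_1,\dots,u_n)\in\cS_t$, using $\sum_{j=1}^{n}(|u_j|+1)=t$ and $\mathbf P(U_1=x)=\frac13\,2^{-|x|}$,
\[
2^{-t}=\prod_{j=1}^{n}2^{-(|u_j|+1)}=\prod_{j=1}^{n}\tfrac32\,\mathbf P(U_1=u_j)=\Big(\tfrac32\Big)^{\!n}\mathbf P\big(U_1=u_1,\dots,U_n=u_n\big).
\]
Next I would observe that the event in \eqref{defK} is the disjoint union, over $(u_1,\dots,u_n)\in\cS_t$, of the events $\{U_1=u_1,\dots,U_n=u_n\}$: on the former, the strictly increasing integer sequence $i\mapsto i+\sum_{j\le i}|U_j|$ reaches $t$ for the first time exactly at $i=n$, so $\eta_t=n$ and $(U_1,\dots,U_{\eta_t})\in\cS_t$; conversely each sequence in $\cS_t$ produces such a sample path. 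Combining this with the two displayed identities,
\[
K(t)=2^{-t}|\cI_t|=2^{-t}|\cS_t|=\sum_{(u_1,\dots,u_n)\in\cS_t}\Big(\tfrac32\Big)^{\!n}\mathbf P\big(U_1=u_1,\dots,U_n=u_n\big)=\mathbf E\Big[\Big(\tfrac32\Big)^{\!\eta_t}\ind_{\{V_i\ge0\ \forall i\le\eta_t,\,V_{\eta_t}=0,\,\eta_t+\sum_{j\le\eta_t}|U_j|=t\}}\Big],
\]
which is \eqref{defK} since $\big(\tfrac32\big)^{\eta_t}=e^{\log(3/2)\,\eta_t}$.

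The point I expect to be the main obstacle is the structural input used in the bijection: that a horizontal excursion of the uniform 2-sided prudent walk, flipped above the $x$-axis, never takes a west step, so that it genuinely decomposes into east-step-plus-vertical-run pieces. This is exactly where the 2-sided prudent constraint is used; I would derive it from the excursion decomposition of Section \ref{decexcu}, noting that a west step inside a horizontal excursion would point the walk towards a site of the preceding vertical excursion (or, for the very first horizontal excursion, towards the forbidden quadrant $(-\infty,0]^2$). Everything downstream --- the translation of the three constraints, the prudence of the assembled path, and the reweighting --- is then routine.
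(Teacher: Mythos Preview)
Your proof is correct and follows essentially the same approach as the paper: both set up the bijection between $\cI_t$ and the set of increment sequences $\cS_t=\bigcup_n\cL_{n,t}$ (your ``pieces'' are the paper's horizontal step plus vertical stretch, your $u_j$ is the paper's $\ell_j$), and both perform the same reweighting $2^{-(|u_j|+1)}=\tfrac32\,\bP(U_1=u_j)$ to reach \eqref{defK}. You are in fact more careful than the paper in justifying why a horizontal excursion contains no west step and why the reconstructed path is prudent and self-avoiding; the paper simply asserts the bijection.
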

\begin{proof}
For each $\pi\in \cI_t$ (cf. \eqref{defIt}), let $n(\pi):=|\pi_{t,1}-\pi_{0,1}|$ be the number of horizontal steps. Each horizontal step is followed by a stretch of vertical steps, and for $1\leq i\leq n$, let $\ell_i\in\Z$ denote the vertical displacement after the $i$-th horizontal step.
This gives a bijection between $\cI_t$ and $\bigcup_{n=1}^t \cL_{n,t}$, where
\begin{equation}
\label{stretches}
 \cL_{n,t} := \bigg\{\underline \ell=(\ell_1,\dots,\ell_n)\in\Z^n \colon \sum_{k=1}^j \ell_k\geq 0\,\, \forall\, j=1,\dots, n,\, \sum_{k=1}^n \ell_k= 0,\, n+\sum_{j=1}^n |\ell_j|=t
 \bigg\}.
\end{equation}
At this stage we note that
\begin{equation}\label{excursion2t}
\frac{1}{2^t} \big | \cI_{t}\big |=\sum_{\pi\in \cI_t} \frac{1}{2^{t-n(\pi)}}\frac{1}{3^{n(\pi)}}\, \Big(\frac{3}{2}\Big)^{n(\pi)}= \sum_{n=1}^t \sum_{\underline \ell\in \cL_{n,t}} \frac{1}{3^{n}}\frac{1}{2^{\sum_{j=1}^n|\ell_j|}} e^{n\, \log(\frac{3}{2})}.
\end{equation}
By identifying $\underline \ell=(\ell_1,\dots,\ell_n)$ in \eqref{excursion2t} with the increments of $V$, we get \eqref{defK}.
\end{proof}

\begin{figure}
\includegraphics[scale=2]{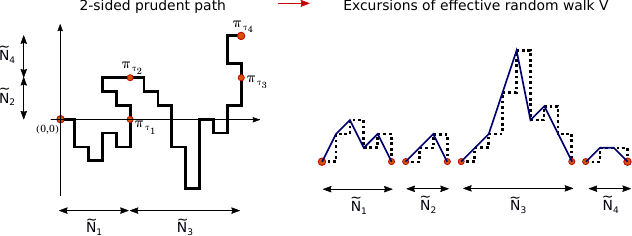}
\caption{We decompose a path $\pi\in \Omega_L^+$ into a sequence of horizontal and vertical excursions $\big((\pi_{\tau_{k-1}},\dots,\pi_{\tau_{k}})\big)_{k=1}^{4}$, each associated with an effective one dimensional random walk excursion.}
\label{fig1}
\end{figure}

\subsection{Representation of the law of a uniform 2-sided prudent walk}\label{secEL}

\begin{lemma}\label{lemma:Klamba}
Let $K$ be as in \eqref{defK}, then there exists $\lambda^*>0$ such that $\widehat K(\lambda^*):=\sum_{t=1}^\infty K(t) e^{-\lambda^* t} =1$.
\end{lemma}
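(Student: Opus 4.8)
\emph{Strategy.} The plan is to treat $\widehat K$ as a monotone function and locate the value $1$ by the intermediate value theorem; the substantive point is that $\widehat K$ actually attains $1$ within its domain of convergence, which we read off from the excursion identity \eqref{two-sided1} together with the exponential growth of $|\Omega_L^+|$.

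\emph{Step 1: elementary properties and automatic positivity.} Write $g(x):=\sum_{t\ge1}K(t)x^t$, so that $\widehat K(\lambda)=g(e^{-\lambda})$. Since the $K(t)$ are nonnegative and $K(1)=\tfrac12|\cI_1|=\tfrac12>0$, the map $\lambda\mapsto\widehat K(\lambda)$ is continuous and strictly decreasing on the interval where the defining series converges, and $\widehat K(\lambda)\to0$ as $\lambda\to+\infty$. A direct inspection of short excursions gives $|\cI_1|=|\cI_2|=|\cI_3|=1$, $|\cI_4|=2$ and $|\cI_5|\ge1$, hence $\widehat K(0)=\sum_{t\ge1}K(t)\ge\tfrac12+\tfrac14+\tfrac18+\tfrac18+\tfrac1{32}>1$ (the series being possibly $+\infty$); as $e^{-\lambda t}\ge1$ for $\lambda\le0$, also $\widehat K(\lambda)\ge\widehat K(0)>1$ for all $\lambda\le0$. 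Consequently every solution of $\widehat K(\lambda^*)=1$ is automatically positive, and it suffices to produce one such solution, i.e.\ to show $\sup_\lambda\widehat K(\lambda)\ge1$.

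\emph{Step 2: finiteness and the basic identity.} On the event inside the expectation in \eqref{defK} one has $\eta_t+\sum_{j\le\eta_t}|U_j|=t$ with nonnegative summands, so $\eta_t\le t$, $e^{\log(3/2)\eta_t}\le(3/2)^t$, and therefore $K(t)\le(3/2)^t$; in particular $\widehat K$ is finite on $(\log(3/2),\infty)$. Summing \eqref{two-sided1} against $x^L$ gives, as an identity of power series with nonnegative coefficients and hence of $[0,\infty]$‑valued functions of $x\ge0$,
\[
F(x):=\sum_{L\ge1}\frac{|\Omega_L^+|}{2^L}\,x^L=\sum_{r\ge1}g(x)^r,
\]
so $g(x)=F(x)/(1+F(x))$ whenever $F(x)<\infty$, while $F(x)=+\infty$ as soon as $g(x)\ge1$. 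Let $\mu_+$ be defined by requiring $2/\mu_+$ to be the radius of convergence of $F$, i.e.\ the exponential growth rate of $|\Omega_L^+|$. For $x<2/\mu_+$ we have $F(x)<\infty$ and hence $g(x)<1$; were $\mu_+\le2$ we could let $x\uparrow1$ and obtain $g(1^-)=\sum_tK(t)\le1$, contradicting Step 1, so $\mu_+>2$.

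\emph{Step 3: conclusion and the main obstacle.} Finally let $x\uparrow2/\mu_+$. By the combinatorial analysis of two‑sided prudent walks in \cite{B10} — which identifies $\mu_+$ and shows $|\Omega_L^+|\asymp\mu_+^L$, so that $F(2/\mu_+)=\sum_L|\Omega_L^+|\mu_+^{-L}=+\infty$ — we obtain $F(x)\to+\infty$ and thus $g(x)=F(x)/(1+F(x))\to1$ as $x\uparrow2/\mu_+$. Since $g$ is continuous and strictly increasing with $g(0)=0$, it maps $[0,2/\mu_+)$ bijectively onto $[0,1)$, and by monotone convergence $g(2/\mu_+)=\lim_{x\uparrow2/\mu_+}g(x)=1$. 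Hence $\widehat K(\lambda^*)=1$ with $\lambda^*:=\log(\mu_+/2)>0$ (as $\mu_+>2$), uniqueness being clear from strict monotonicity. The delicate ingredient is precisely $F(2/\mu_+)=+\infty$, equivalently $g\big((2/\mu_+)^-\big)\ge1$: a priori $\sum_tK(t)x^t$ might converge at its radius of convergence to a value below $1$, in which case no $\lambda^*$ would exist, and excluding this needs control on the dominant singularity of the two‑sided prudent generating function — supplied by \cite{B10}; alternatively one can make the argument self‑contained by locating the radius of convergence of $g$ at $2(\sqrt2-1)$ through the critical point of the discrete‑Laplace walk $V$ and checking $\sum_tK(t)\big(2(\sqrt2-1)\big)^t\ge1$ directly.
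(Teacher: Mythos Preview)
Your argument is correct for the lemma as stated, but it takes a genuinely different route from the paper and has one practical drawback worth noting.

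\medskip

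\textbf{Comparison.} The paper does not touch the two-sided generating function at all. Instead it decomposes each nonnegative $V$-excursion into successive returns to $0$, obtaining the geometric identity $\widehat K(\lambda)=\sum_{r\ge1}G(\lambda)^r$; the whole problem then reduces to showing $G(\lambda^{**})>1/2$ at the abscissa of convergence $\lambda^{**}$. This is done by a martingale (exponential change of measure) computation: one finds an explicit $\hat\lambda$ at which $\bE[e^{\alpha\tau-\hat\lambda\sum|U_i|}]=1$, and a short calculation gives $G(\hat\lambda)=\tfrac12+\tfrac{e^{-2\hat\lambda}}{8}\in(\tfrac12,1)$, whence $G(\lambda^{**})>G(\hat\lambda)>\tfrac12$ by monotonicity. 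Your approach, by contrast, exploits the structural identity $F=g/(1-g)$ from \eqref{two-sided1} and reads off $g(2/\mu_+)=1$ from the divergence of $F$ at its radius; this gives the pleasant identification $\lambda^*=\log(\mu_+/2)$, but the divergence $F(2/\mu_+)=+\infty$ is exactly the delicate point, and you import it from \cite{B10} rather than prove it.

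\medskip

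\textbf{What you lose.} The paper's proof is self-contained and, more importantly, it establishes $\widehat K(\lambda^{**})>1$ with $\lambda^{**}<\lambda^*$, which is precisely the content of Remark~\ref{remKK}: $K^*$ has exponential tails. This fact is used repeatedly afterwards (e.g.\ \eqref{Estar}, \eqref{Lmax}, the renewal estimates). Your argument pins $\lambda^*$ at the radius of convergence of $F$, but says nothing about the radius of convergence of $g$ itself; as written it leaves open whether $\sum_t K(t)e^{-\lambda t}<\infty$ for some $\lambda<\lambda^*$. Your closing sentence gestures at the fix (locating the radius of $g$ at $2(\sqrt2-1)$ via the critical tilt of the Laplace walk), but that is exactly the martingale computation the paper carries out in full, and without it the subsequent sections would not go through.
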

\begin{remark}\label{remKK}
{\rm We will denote by $K^{*}$ the probability measure on $\N$ defined by
\begin{equation}
K^{*}(t)=K(t) e^{-\lambda^* t}, \qquad t\in\N.
\end{equation}
The proof of Lemma \ref{lemma:Klamba} below shows that there exists $\hat\lambda<\lambda^*$ such that $1<\widehat K(\hat \lambda)<\infty$. Therefore $K^*$ has exponential tail, i.e., there exist $c_1,c_2>0$ such that $K^{*}(n)\leq c_1 e^{-c_2 n}$ for every $n\in \N$.}
\end{remark}

\smallskip

The proof of Lemma \ref{lemma:Klamba} will be given at the end of the present section. We first explain how the law $K^*$ can be used to express the law $\bP^*$ of the excursions of the uniform two-sided prudent walk. Continuing Section \ref{sec:ERW}, let $\cV_\infty$ be the set of all
non-negative excursions of the effective walk, i.e.,
\be{Vinf}
\cV_\infty:=\bigcup_{N\geq 1}  \Big\{(V_i)_{i=0}^N\colon\, V_0=0, V_i\geq 0\ \forall i\leq N,\, V_N=0\Big\}.
\ee
By \eqref{defK} and Lemma \ref{lemma:Klamba}, we obtain the following probability law $\bP^*$ on $\cV_\infty$, with Radon-Nikodym derivative
\be{defPstar}
\frac{\dd \bP^*}{\dd \bP}\big((V_i)_{i=0}^N\big)=e^{\log(\frac{3}{2}) N-\lambda^{*} (N+\sum_{i=1}^N |U_i|)}.
\ee
We will show that $\bP^*$ is in fact the law of a uniform 2-sided prudent walk excursion. To that end, consider a sequence $(t_i,n_i)_{i=1}^r\in \N^r\times\N^r$ satisfying $t_1+\dots+t_r=L$ and $n_i\leq t_i$ for every $i\leq r$. Let $\Omega_L^+ \big( (t_i,n_i)_{i=1}^r \big)$ denote the
set of 2-sided prudent path consisting of $r$ excursions, where the $i$-th excursion has total length $t_i$, with $n_i$ horizontal (resp. vertical) steps if it is a horizontal (resp.\ vertical) excursion. By the reasoning leading to \eqref{two-sided1}, with $\alpha^*:=\log(3/2)-\lambda^*$, we obtain
\begin{equation}
\label{probeve1}
\frac{1}{2^L}|\Omega_L^+\big((t_i,n_i)_{i=1}^r\big)| e^{-\lambda^*L}=\prod_{i=1}^r \bE\bigg[e^{\alpha^* n_i-\lambda^*(t_i-n_i)} \,  \ind_{\big\{V_j \geq 0 \, \forall
j \leq n_i,\, V_{n_i}=0,\, n_i+\sum_{j=1}^{n_i} |U_j|=t_i \big\}}\bigg].
\end{equation}
If $(\tilde T_i,\tilde N_i)_{i\in\N}$ denotes an i.i.d.\ sequence such that $\tilde N_1=N$ and $\tilde T_1=N+\sum_{i=1}^N |U_i|$ for a random walk excursion $(V_i)_{i=0}^N$ following the law  $\bP^*$ in \eqref{defPstar}, and
\be{gammaLV}
\tilde \gamma_L :=\min\{i\geq 1 \, :\, \tilde T_1+\dots +\tilde T_{i} \geq L\},
\ee
then by \eqref{two-sided1} and \eqref{probeve1}, for any set of paths $A$ which is a union of some $\Omega_L^+\big((t_i,n_i)_{i=1}^r\big)$, we have
\be{P+unif}
\bP_{\text{unif},L}^+ (A)=\frac{|\Omega_L^+ (A)| }{|\Omega_L^+|}
= \frac{\bE^*\bigg[\ind_A \ind_{\{\tilde T_1+\dots +\tilde T_{\tilde \gamma_L}=L\}}\bigg]}
{\bP^*\bigg[ \tilde T_1+\dots +\tilde T_{\tilde \gamma_L}=L\bigg]}, 
\ee
where we also used $\bP^*$ to denote the joint law of the i.i.d.\ sequence of effective random walk excursions that give rise to  $(\tilde T_i,\tilde N_i)_{i\in\N}$. This representation will be the basis of our analysis.
\medskip


\bpr[Proof of Lemma \ref{lemma:Klamba}]

The existence of
$\lambda^{*}$ is guaranteed if $\lambda^{**}:=\inf\{\lambda>0\colon\, \widehat K(\lambda) <\infty\}$ satisfies $\widehat K(\lambda^{**})>1$.
To show this, let $\tau$ be the first time the walk $V$ returns to or crosses the origin, i.e.,
\be{deftau}
\tau =\begin{dcases*}
	1 & if $V_1=0$,\\
  \min\{i\geq 2\colon\, V_{i-1} V_i\leq 0\} & otherwise.
  \end{dcases*}
\ee
Let $\alpha:=\log(3/2)-\lambda$. By \eqref{defK} and decomposing $V\in \cV_\infty$ into positive excursions, we can write
\begin{align}\label{dived}
\nonumber \widehat K(\lambda)&=\sum_{t\geq 1} \bE\Big[e^{(\log(\frac{3}{2})-\lambda ) \eta_t-\lambda (t-\eta_t)}\,   \ind_{\{V_i\geq 0\ \forall i\leq \eta_t, V_{\eta_t=0},\, \eta_t+\sum_{i=1}^{\eta_t} |U_i|=t\}}\Big]\\
\nonumber &= \sum_{t\geq 1} \sum_{N\leq t} \bE\Big[e^{ \alpha N-\lambda (t-N)} \,  \ind_{\{V_i\geq 0\ \forall i\leq N, \, V_N=0,\, N+\sum_{i=1}^{N} |U_i|=t\}}\Big]\\
\nonumber  &= \sum_{N=1}^{\infty} \bE\Big[e^{\alpha N} \, e^{-\lambda \sum_{i=1}^N |U_i|}  \ind_{\{V_i\geq 0\ \forall i\leq N, \, V_N=0\}}\Big]\\
\nonumber  &= \sum_{N=1}^\infty \sum_{r=1}^{\infty} \sum_{n_1+\dots+n_r=N} \prod_{i=1}^r \bE\Big[e^{\alpha  \tau} \, e^{-\lambda \sum_{i=1}^{\tau} |U_i|} \,  \ind_{\{V_1\geq 0,\, \tau=n_i,\, V_{n_i}=0\}}\Big]\\
\nonumber &= \sum_{r=1}^{\infty} \Big(\sum_{n=1}^\infty \bE\Big[e^{\alpha \tau -\lambda \sum_{i=1}^{\tau} |U_i|} \, \ind_{\{V_1\geq 0, \, \tau=n,\, V_{\tau}=0\}}\Big]\Big)^r\\
&=
\sum_{r=1}^{\infty} \left( \bE\Big[e^{\alpha \tau-\lambda \sum_{i=1}^{\tau} |U_i|}\,   \ind_{\{V_1\geq 0, V_{\tau}=0\}}\Big]\right)^r
=: \sum_{r=1}^\infty G(\lambda)^r.
\end{align}
Therefore $\lambda^{**}=\inf \{\lambda>0\colon\, G(\lambda)< 1\}$, and it suffices to show that $G(\lambda^{**})>1/2$. Note that
\begin{equation}\label{G1}
\bE\Big[e^{\alpha \tau-\lambda \sum_{i=1}^{\tau} |U_i|}\,   \ind_{\{V_1= 0\}}\Big]=\frac{e^\alpha}{3},
\end{equation}
and
\begin{equation}\label{G2}
\bE\Big[e^{\alpha \tau-\lambda \sum_{i=1}^{\tau} |U_i|}\,   \ind_{\{V_1>0, \tau=n\}}\Big] = \bE\Big[e^{\alpha \tau-\lambda \sum_{i=1}^{\tau} |U_i|}\,   \ind_{\{V_1>0, \tau=n, V_\tau=0\}}\Big] \frac{1}{1-e^{-\lambda}/2} ,
\end{equation}
because given $(V_i)_{i=0}^{n-1}$ with $V_1>0$, the events $\{\tau=n, V_n=0\}$ and $\{\tau=n\}$ differ only in that the first event requires $U_n=-V_{n-1}$, while the second event requires $U_n\leq -V_{n-1}$, and the probability ratio of the two events is precisely
$\sum_{k=0}^\infty \frac{e^{-k\alpha}}{2^k} = \frac{1}{1-e^{-\lambda}/2}$ by \eqref{lawP}. Summing over $n$ in \eqref{G2}, using the symmetry of $V$ and \eqref{G1} then gives
\begin{align}\label{Glambda}
G(\lambda)&= \frac{e^\alpha}{3}\Big(\frac{1}{2}+\frac{e^{-\lambda}}{4}\Big)+\frac{1}{2} \Big(1-\frac{e^{-\lambda}}{2}\Big)
\bE\Big[ e^{\alpha \tau-\lambda \sum_{i=1}^{\tau} |U_i|}\Big].
\end{align}

Now let $\hat \lambda$ be the unique solution of
$$
\log \bE[e^{-\lambda |U_1|}]=-\alpha=\lambda-\log(3/2), \qquad \lambda \in [0, \infty).
$$
Then $(M_n^{\hat\lambda})_{n\geq 0}:=(e^{\alpha n-\hat \lambda \sum_{i=1}^n |U_i|})_{n\geq 0}$ is a positive martingale. We will show that $\bE[M_\tau^{\hat\lambda}]=1$, which then gives $G(\hat \lambda)= \frac{1}{2}+\frac{e^{-2\hat\lambda}}{8}\in (1/2,1)$. By definition, we have $\hat\lambda> \lambda^{**}$. Since $\lambda\mapsto G(\lambda)$ is strictly decreasing, we conclude that $G(\lambda^{**})> G(\hat\lambda)>1/2$.
\smallskip

It remains to prove that $\bE[M_\tau^{\hat\lambda}]=1$. Note that  $\tau$ is an almost surely finite stopping time, so that $M^{\hat\lambda}_{n\wedge \tau}$ converges almost surely to $M_\tau^{\hat\lambda}$. Fatou's lemma implies  $\bE[M_{\tau}^{\hat\lambda}]\leq 1$. On the other hand,
\begin{align}\label{compuG}
\bE[M_{\tau}^{\hat\lambda}] = \lim_{n\to \infty}  \bE[M_{\tau}^{\hat\lambda} \, \ind_{\{\tau \leq n\}}]
= \lim_{n\to \infty} \big(1-\bE[M_{n\wedge \tau}^{\hat\lambda} \, \ind_{\{\tau>n\}}]\big).
\end{align}
It remains to prove that $\lim_{n\to \infty} \bE[M_{n}^{\hat\lambda} \ind_{\{\tau>n\}}]=0$. Let $(\widetilde U_i)_{i\geq 1}$ be i.i.d.\  with law $\widetilde \bP$ such that
$$\widetilde \bP(\widetilde U_1=x)= \frac{1}{\bE[e^{-\hat \lambda |U_1|}]} \, e^{-\hat \lambda |x|} \, \bP(U_1=x), \quad x\in \Z.$$
We observe that
\begin{align}\label{compudd}
\bE[M_{n}^{\hat\lambda} \, \ind_{\{\tau>n\}}]=e^{\alpha n +\log \bE[ e^{-\hat \lambda |U_1|}] n} \, \widetilde \bP(\tau>n)
=\widetilde \bP(\tau>n).
\end{align}
Under $\widetilde \bP$, the random walk increments $(\widetilde U_i)_{i\geq 1}$ are symmetric and integrable. Thus,  $\tau$ is finite $\widetilde \bP$-a.s.\ and the right hand side in \eqref{compudd} converges to $0$ as
$n$ tends to $\infty$.  We conclude that  $\bE[M_{\tau}^{\hat\lambda}]=1$.
\epr

\subsection{Scaling limit of the uniform 2-sided prudent walk}

In this section we prove Theorems \ref{scalingtwosided} and \ref{scalingtwosided2}.


\bpr[Proof of Theorems \ref{scalingtwosided} and \ref{scalingtwosided2}]
Let $\bP^*$ be the law of the i.i.d.\ sequence of effective random walk excursions as in \eqref{defPstar}, and let $(\tilde T_i,\tilde N_i)_{i\in\N}$ and $\tilde \gamma_L$ be as introduced after \eqref{probeve1}. Then by the law of large numbers, as $L\to\infty$, almost surely we have $ \frac{\tilde \gamma_L}{L} \to \frac{1}{\bE^*[\widetilde T_1]}>0$, since $\tilde T_1$ has exponential tail by Remark \ref{remKK}. Let $\tilde\tau_k =\sum_{i=1}^k \tilde T_i$, which defines a renewal process. For any $t_0< 1/\bE^*[\widetilde T_1]$, note that by the renewal theorem, cf. \cite[Appendix A]{GB07}, the law of $(\tilde T_i, \tilde N_i)_{1\leq i\leq t_0L}$ conditioned on $L\in \tilde \tau$ is equivalent to its law under $\bP^*$ without conditioning, in fact their total variation distance tends to $0$ as $L$ tends to infinity since $L-\sum_{i=1}^{t_0L} \tilde T_i\to\infty$ in probability. Therefore to identify the scaling limit of $(\pi_i)_{i=1}^{t_0L}$ under $\bP_{\text{unif},L}^+$, by \eqref{P+unif}, it suffices to consider $\bP^*$ in place of $\bP_{\text{unif},L}^+$.
\medskip

Recall that the 2-sided uniform prudent walk $\pi$ is constructed by concatenating alternatingly eastward horizontal excursions and northward vertical excursions, where modulo rotation, the excursions have a one-to-one correspondence with the effective random walk excursions. Therefore if we let $X_n:=(X_{n,1}, X_{n, 2})$ be a random walk on $\Z^2$ with
\begin{equation}\label{Xn12}
X_{n, 1} = \sum_{i=1}^n (\tilde N_{2i-1}-c(\tilde T_{2i-1}+\tilde T_{2i})), \  \ X_{n,2} = \sum_{i=1}^n (\tilde N_{2i}-c(\tilde T_{2i-1}+\tilde T_{2i})), \quad \mbox{where} \ c= \frac{\bE^*[\widetilde N_1]}{2\bE^*[\widetilde T_1]},
\end{equation}
then $X_n = \pi_{\phi(n)}-c\phi(n)\vec e_1$, with $\phi(n) =\sum_{i=1}^{2n} \tilde T_{i}$ playing the role of time change. By the strong law of large numbers, $\bP^*$-a.s., we have
\begin{equation}\label{phiconv}
\Big(\frac{1}{L} X_{tL}\Big)_{t\geq 0} \to 0 \qquad \mbox{and} \qquad \Big(\frac{\phi(tL)}{L}\Big)_{t\geq 0} \to \big(2t\bE^*[\tilde T_1]\big)_{t\geq 0}.
\end{equation}
It is then easily seen that, with $I:=\{\frac{1}{L}\sum_{i=1}^{2k} \widetilde T_i : 1\leq k\leq t_0L/2\}$, the rescaled path $\tilde \pi^L$ satisfies
\begin{equation}\label{supconv}
\sup_{t\in I}\big|\tilde \pi^L_t-c t \, \vec{e}_1 \big| = \sup_{t\in I} \Big|\frac{1}{L}X_{\phi^{-1}(tL)} \Big|
 \to 0 \qquad \bP^*\mbox{-}a.s.\ \mbox{ as } L\to\infty.
\end{equation}
In fact \eqref{supconv} still holds if the supremum is taken over all $0\leq t\leq \frac{1}{L} \sum_{i=1}^{t_0L} \widetilde T_i$, since for the $i$-th excursion, the prudent path deviates from the end points of the excursion by at most $\widetilde T_i$, which has exponential tail by Remark \ref{remKK}. It is then easily seen that
\begin{equation}\label{Lmax}
\frac{1}{\sqrt{L}} \max_{1\leq i\leq L} \widetilde T_i \to 0 \qquad \bP^*\mbox{-}a.s.\ \mbox{ as } L\to\infty.
\end{equation}
Therefore \eqref{supconv} holds with $\sup$ taken over $t\in [0,\tilde t_0]$, with $\tilde t_0:=\lim_{L\to\infty}\frac{1}{L} \sum_{i=1}^{t_0L} \widetilde T_i = t_0\bE^*[\tilde T_1]<1$, and $(\tilde \pi^L_t)_{t\in [0,\tilde t_0]}$ converges in probability to $(ct\vec e_1)_{t\in [0, \tilde t_0]}$ under $\bP^*$ as well as $\bP_{{\rm unif},L}^+$. We can now deduce \eqref{scalingtwosided} by letting $\tilde t_0\uparrow 1$, using that modulo time reversal, translation and rotation,  $(\pi_i)_{i=\gamma_L-\epsilon L}^{\gamma_L}$ has the same law as $(\pi_i)_{i=1}^{\epsilon L}$ under $\bP_{\text{unif},L}^+$, and hence is negligible in the scaling limit as $\epsilon\downarrow 0$.
\medskip

The proof of Theorem \ref{scalingtwosided2} is similar. By \eqref{Lmax}, it suffices to consider $\pi_t-ct\vec e_1$ along the sequence of times $(\phi_n)_{n\in\N}$, which is a time change of $(X_n)_{n\in\N}$. It is clear that $(X_{tL}/\sqrt{L})_{t\geq 0}$ converges to a Brownian  motion $(\tilde B_t)_{t\geq 0}$ with covariance matrix $\bE[\tilde B_{1, i} \tilde B_{1, j}] = \bE[X_{1, i} X_{1, j}]$. Undo the time change $\phi$, which becomes asymptotically deterministic by \eqref{phiconv}, we find that under $\bP^*$, hence also $\bP_{\text{unif},L}^+$,
$$
\sqrt{L}(\tilde \pi^L_t - ct\vec e_1)_{t\in [0, \tilde t_0]} \Rightarrow (B_t)_{t\in [0, \tilde t_0]},
$$
where $B$ is a Brownian motion with covariance matrix
\begin{equation}\label{covar}
\bE[B_{1, i} B_{1, j} ] = \frac{\bE\big[\big(2\tilde N_i\bE^*[\tilde T_1] -\bE^*[\tilde N_1](\tilde T_1 +\tilde T_2)\big)\big(2\tilde N_j\bE^*[\tilde T_1] -\bE^*[\tilde N_1](\tilde T_1 +\tilde T_2)\big)\big]}{8\bE^*[\tilde T_1]^3}, \  i, j=1, 2. \!\!\!
\end{equation}
Letting $\tilde t_0\uparrow 1$ and applying the same reasoning as before then gives \eqref{concentr2sidedclt}.
\epr

\section{Uniform prudent walk}\label{pw}

By symmetry, we may assume without loss of generality that the prudent walk starts with an east step, and the first vertical step is a north step. We will assume this from now on.

\subsection{Decomposition of a prudent path into excursions in its range}\label{decppe}
We now decompose each prudent path $\pi\in \Omega_L$ into a sequence of excursions within its range (see Figure \ref{fig2}). We use the same decomposition as in \cite[Section 2]{BFV10}, which is slightly different from our decomposition for the 2-sided prudent path.
\smallskip

For every $t\leq L$, let $\cA_t$ (resp. $\cB_t$) denote the projection of the range of $\pi$ onto the $x$-axis (resp. $y$-axis), i.e.,
\begin{equation}
\cA_t =\big\{ \pi_{i, 1}\in \Z: 0\leq i\leq t\} \qquad \text{and}\qquad
\cB_t =\big\{ \pi_{i, 2}\in \Z: 0\leq i\leq t\}.
\end{equation}
Let $\cW_t=|\cA_t|$ and $\cH_t=|\cB_t|$ denote respectively the width and height of the range $\pi_{[0,t]}$.  Define $\cH_0=\cW_0=1$, and set $\rho_0=\nu_0=0$. For $k\geq 0$, define
\begin{align}
&\rho_{k+1}=\min\{t>\upsilon_{k} \, :\, \cH_t>\cH_{t-1}\}-1,
&\upsilon_{k+1}=\min\{t>\rho_{k+1}\,:\, \cW_t>\cW_{t-1}\}-1.
\end{align}
We say that on each interval $[\rho_k,\upsilon_k]$ (resp.\  $[\upsilon_k,\rho_{k+1}]$) $\pi$ performs a vertical (resp.\ horizontal) excursion in its range, and the path is monotone in the vertical (resp.\ horizontal) direction. Note that each excursion ends by exiting one of two sides of the smallest rectangle containing the range of $\pi$ up to that time, and the excursion ends at a corner of this rectangle.

\smallskip
Let $\gamma_L(\pi)$ be the number of complete excursions contained in $\pi$, where the last excursion is considered complete if adding an extra horizontal or vertical step can make it complete. Let $T_i$ denote the length of the $i$-th excursion, $N_i$ its horizontal (resp.\ vertical) extension if it is a horizontal (resp.\ vertical) excursion, and let $\cE_i=1$ if the excursion crosses the range and let $\cE_i=0$ otherwise. More precisely, a horizontal excursion on the interval $[\nu_k, \rho_{k+1}]$ crosses the range if $|\pi_{\rho_{k+1}, 2}-\pi_{\nu_k, 2}|= \cH_{\rho_{k+1}}$. We can thus associate with every $\pi\in \Omega_L$ the sequence $(T_i,N_i,\cE)_{i=1}^{\gamma_L(\pi)}$.
Note that the $i$-th excursion is a horizontal excursion if $i$ is odd, and vertical excursion if $i$ is even.
For $i\in\N$, let $R_{i-1}$ denote the width (resp.\ height) of the range of $\pi$ before the start of the $i$-th excursion if it is a vertical (resp.\ horizontal) excursion. It can be seen that $R_{i}=R_{i-2}+N_i$ for $i\geq 1$, with $R_{-1}=R_0=0$.


\begin{figure}
\includegraphics[scale=2.5]{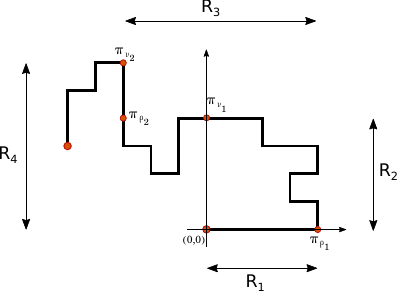}
\caption{We decompose a path $\pi\in \Omega_L$ into a sequence of excursions. 
The $i$-th excursion is a horizontal excursion if $i$ is odd, and vertical excursion if $i$ is even.
The $1$-st excursion corresponds to the sub-path $\pi_{[0,\rho_1]}$, the $2$-nd to the sub-path $\pi_{[\rho_1,\upsilon1]}$ and so on. At the end of the $i$-th excursion, if $i$ is odd (resp. if $i$ is even) we set $R_i$ to be the width (resp. the height) of the range.
The last excursion is incomplete. 
}
\label{fig2}
\end{figure}

\subsection{Effective random walk excursion in a slab}\label{effslab}

The one-to-one correspondence in Section \ref{sec:ERW} between the excursion paths (which are partially directed) and the effective random walk paths can be extended to the current setting, except that now the effective random walk lies in a slab corresponding to the range of the path at the start of the excursion, and the excursion may end on either side of the slab. As a consequence, we define a measure $L_R$ on $\N\times \N\times \{0,1\}$ by
\begin{equation}\label{LR}
\begin{aligned}
 L_{R}(t,n,0)&=\bE\Big[e^{\log(\frac{3}{2}) n-\lambda^*t} \, \ind_{\{V_i\in \{0,\dots,R\}\  \forall i\leq n,\,  V_n=0,\,  \sum_{i=1}^n |U_i|=t-n\}}\Big],\\
L_{R}(t,n,1)&=\bE\Big[e^{\log(\frac{3}{2}) n-\lambda^*t} \, \ind_{\{V_i\in \{0,\dots,R\}\  \forall i\leq n,\,  V_n=R,\,  \sum_{i=1}^n |U_i|=t-n\}}\Big].
\end{aligned}
\end{equation}
When $R=0$, define $L_0(t,n, 1)$ as above and define $L_0(t, n, 0)=0$. Let $\widehat L_R$ be a variant of $L_R$ that accounts for an incomplete excursion (cf. Figure \ref{fig2}), i.e.,
\begin{align}\label{hatLR}
\widehat L_R(t,n)&= \bE\Big[e^{\log(\frac{3}{2}) n-\lambda^*t} \, \ind_{\{V_i\in \{0,\dots,R\}\  \forall i\leq n,\, 0< V_n<R, \, \sum_{i=1}^n |U_i|=t-n\}}\Big],
\end{align}
where $\lambda^*$ is as in Lemma \ref{lemma:Klamba}. We also set $\widehat{L}_R(t)=\sum_{n\geq 1}\widehat L_R(t,n)$ and $\widehat{L}_R(0)=1$.
\smallskip

Let $\alpha^*=\log(\frac{3}{2})-\lambda^*$, and let $(t_i, n_i, \gep_i)\in \N^2 \times \{0,1\}$, $1\leq i\leq r$, be such that $t_1+\dots+t_r\leq L$ and $n_i\leq t_i$. Let $\Omega_L \big( (t_i,n_i,\gep_i)_{i=1}^r \big)$ be the set of prudent paths containing $r$ complete excursions, with
$(T_i,N_i,\cE_i)_{i=1}^r=(t_i,n_i,\gep_i)_{i=1}^r$, and recall $(R_{i-1})_{i\in\N}$ from the end of Section \ref{decppe}. Reasoning as for \eqref{probeve1}, we then have
\begin{align}
\frac{1}{2^L} |\Omega_L \big( (t_i,n_i,\gep_i)_{i=1}^r \big)| e^{-\lambda^* L}=&
\prod_{i=1}^r \bE\bigg[e^{\alpha^* n_i-\lambda^*(t_i-n_i)} \,  \ind_{\big\{V_i\in [0,R_{i-1}] \, \forall
i \leq n_i,\, V_{n_i}=\gep_i R_{i-1},\, n_i+\sum_{j=1}^{n_i} |U_j|=t_i \big\}}\bigg]  \nonumber \\
& \quad \times \, \widehat{L}_{R_r}\big(L-(t_1+\dots+t_r)\big)  \label{tug}\\
=&\bigg[\prod_{i=1}^r  L_{R_{i-1}}(t_i,n_i,\gep_i)\bigg]\, \widehat{L}_{R_r}\big(L-(t_1+\dots+t_r)\big), \nonumber
\end{align}
where $\widehat L_{R_r}(L-(t_1+\dots+t_r))$ accounts for the last incomplete excursion in $\pi$.

\subsection{Representation of the law of a uniform prudent walk}
\label{sec:sampling}
We now show how to represent the law of the uniform prudent walk in terms of the excursions of the effective random walk $V$.
\smallskip

For $R\in \N$, let $\cV_R$ be the set of effective random walk paths in a slab of width $R$ and ending at either $0$ or $R$. Namely,
\be{sample}
\cV_R:=\bigcup_{N\geq 1}\Big[ \cV^{\, 1}_{N,R} \cup  \cV^{\, 0}_{N,R}\Big],
\ee
where for $a=0, 1$,
\begin{align}\label{VR}
\cV_{N,R}^{\, a}:= \Big\{(V_i)_{i=0}^N\colon\, V_0=0, V_i\in \{0,\dots,R\}\ \forall i\in \{0,\dots,N\}, V_N=aR\Big\}.
\end{align}
Recall the effective random walk excursion measure $\bP^*$ from \eqref{defPstar}. We will define a probability law $\bP_R^{*}$ on $\cV_R$ by sampling a path under $\bP^*$ and truncating it if it passes above $R+1$. More precisely, define the truncation $T_R:\cV_\infty\mapsto \cV_{R}$ as follows. Given $V:=(V_i) _{i=0}^N\in \cV_\infty$, let $T_R V := V$ if $V_i\leq R$ for every $i\leq N$. Otherwise, let $\tau_R:=\inf\{i\geq 1\colon V_i\geq R+1\}$ and set
\be{deftauR}
(T_R V)_i=V_i \quad \text{for $i\leq \tau_R-1$ and}\  (T_RV)_{\tau_R}=R.
\ee
Then define $\bP_R^*$ as the image measure of $\bP^*$ under $T_R$. For each trajectory $V\in \cV_R$, we associate $(T,N,\cE)$ such that $N$ is the number of increments $(U_i)_{i=1}^N$ of $V$, $T=N+\sum_{i=1}^N |U_i|$, and $\cE=1$ if $V_N=R$ and $\cE=0$ if $V_N=0$ (if $R=0$, set $\cE=1$). Let $L^*_{R}$ denote the law of $(T,N,\cE)$ when $V$ is sampled from $\bP_R^*$, and we observe that $L_R^*$ and $L_R$ (cf. \eqref{LR}) coincide when $\gep=0$, i.e.,
\be{L0=L*0}
L_R(t,n,0)=L_R^{*}(t,n,0),\quad  (t,n)\in \N\times \N.
\ee

\smallskip
Let $(\widetilde V^{(i)})_{i\geq 1}$ be an i.i.d.\ sequence of effective walk excursions with law $\bP^{*}$, and for each $i\in\N$, let
$(\widetilde T_i, \widetilde N_i)$ denote the total length and the number of increments of $\widetilde V^{(i)}$. We now construct a sequence $(T_i,N_i,\cE_i)_{i\geq 1}$ from $(\widetilde V^{(i)})_{i\geq 1}$ inductively, using the truncation map $T_R$. First set $R_{-1}=R_0:=0$. For each $i\geq  1$, set
\be{defTNV}
V^{(i)}=T_{R_{i-1}}\widetilde V^{(i)},\qquad (N_i,T_i,\cE_i)=(N, T, \cE)(V^{(i)}),\quad \text{and} \quad R_i=R_{i-2}+N_i.
\ee
where $(N,T,\cE)(V^{(i)})$ is the triple $(N, T, \cE)$ associated with $V^{(i)}\in \cV_{R_{i-1}}$. For every $i\geq 1$, we have $N_i\leq \widetilde N_i$ and $T_i\leq \widetilde T_i$, and conditioned on $(T_j,N_j,\cE_j)_{j=1}^{i-1}$, the law of $(N_i,T_i,\cE_i)$ is $\bP_{R_{i-1}}^{*}$. Note that the excursion decomposition of a prudent path in Section \ref{decppe} gives exactly a sequence of excursions of the form $(T_R\widetilde V^{(i)})_{i\geq 1}$.
\medskip

For a set of prudent paths $A\subset \Omega_L$ depending only on $(T_i,  N_i, \cE_i)_{i=1}^{\gamma_L}(\pi)$, where
\begin{equation}
\gamma_L=\min\{i\geq 1\colon\, T_1+\dots+T_i> L\}-1,
\end{equation}
let $(t_i, n_i, \gep_i)_{i=1}^r\sim A$ denote compatibility with $A$. By \eqref{tug}, we then have
\begin{align}\label{probeve}
& \frac{1}{2^L} |A| e^{-\lambda^* L} = \sum_{(t_i, n_i, \gep_i)_{i=1}^r \sim A} \bigg[\prod_{i=1}^r  L_{R_{i-1}}(t_i,n_i,\gep_i)\bigg]\, \widehat{L}_{R_r}\big(L-(t_1+\dots+t_r)\big) \nonumber\\
= \ & \bE^*\Bigg[\ind_{\{( T_i,    N_i,   \cE_i)_{i=1}^{\gamma_L}\sim A\}} \prod_{i=1}^{\gamma_L}\   \frac{L_{R_{i-1}}( T_i,  N_i,  \cE_i)}{L^*_{R_{i-1}}(  T_i,  N_i, \cE_i)} \, \cdot \,
 \frac{\widehat{L}_{R_{\gamma_L}}(L-( T_1+\dots+  T_{\gamma_L}))}{\bP_{R_{\gamma_L}}^*\big( T> L-( T_1+\dots+T_{\gamma_L}\big)}\Bigg],
\end{align}
where $\bE^*$ is expectation over the i.i.d.\ excursions $(\widetilde V^{(i)})_{i\geq 1}$, and hence $( T_i,  N_i, \cE_i)_{i\geq 1}$.

\medskip

We conclude this section with two technical lemmas needed to control the ratios inside the expectation in \eqref{probeve}.
For ease of notation, let us denote
\begin{equation}
L_R(t,\epsilon):=\sum_{n\geq 1} L_R(t,n,\epsilon) \qquad \mbox{and}  \qquad L_R(t):=L_R(t,0)+L_R(t,1).
\end{equation}

\begin{lemma}\label{Cbonud}
There exists $C>0$ such that
\be{rnd}
\frac{L_R(t)}{L_R^*(t)}\leq C\,  t \, \ind_{\{t\geq R\}}\, +\, \ind_{\{t<R\}} \qquad \mbox{for all } t\in \N.
\ee
\end{lemma}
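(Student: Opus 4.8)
The plan is to compare $L_R(t)$ and $L_R^*(t)$ directly via their probabilistic representations, using the fact (from \eqref{L0=L*0}) that the two quantities already agree on the event $\{\cE = 0\}$, so that the only discrepancy comes from the event $\{\cE = 1\}$, i.e.\ from excursions that reach the top of the slab. More precisely, write $L_R(t) = L_R(t,0) + L_R(t,1)$ and $L_R^*(t) = L_R^*(t,0) + L_R^*(t,1)$; since $L_R(t,0) = L_R^*(t,0)$, it suffices to bound the ratio $L_R(t,1)/L_R^*(t)$, and in fact it is enough to show $L_R(t,1) \le C\, t\, L_R^*(t)\, \ind_{\{t \ge R\}} + L_R^*(t)\,\ind_{\{t<R\}}$. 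The case $t < R$ is essentially trivial: an excursion of total length $t$ cannot reach height $R$ if $t < R$ (the walk $V$ takes $n \le t$ steps with $\sum|U_i| = t - n$, so it can rise by at most $t - 1 < R$), hence $L_R(t,1) = 0$ for $t < R$ when $R \ge 1$, and the stated bound holds with the second term; the $R = 0$ case is handled by the convention $L_0(t,n,0)=0$, so $L_0(t) = L_0(t,1) = L_0^*(t)$ and the ratio is $1$. So the content is in the regime $t \ge R \ge 1$.

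For $t \ge R$, I would unfold the definition \eqref{LR} of $L_R(t,1)$ and the definition of $L_R^*$ as the pushforward of $\bP^*$ under the truncation $T_R$. The key observation is that $L_R^*(t,1)$ is the $\bP^*$-weight of \emph{all} excursions whose truncation has length $t$ and ends at $R$ — this includes the untruncated excursions that stay in $[0,R]$ and end at $R$ (weight exactly $L_R(t,1)$) \emph{plus} the excursions that genuinely exceed $R$ and get truncated. Thus $L_R^*(t,1) \ge L_R(t,1)$ is immediate, which already gives the bound with constant $C=1$ — \emph{provided} $L_R^*(t) > 0$, which it is whenever $L_R(t,1)>0$. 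Wait: this looks too easy, so the subtlety must be that one actually wants the reverse kind of control, namely that $L_R(t)$ is not too much \emph{larger} than $L_R^*(t)$; let me reconsider. The genuine point is that $L_R^*(t) = L_R^*(t,0) + L_R^*(t,1)$, and truncation can \emph{move mass from $\cE=1$ into shorter lengths}, so a priori $L_R^*(t,1)$ could be much smaller than the full $\bP^*$-mass of $\{$reaches $R$, total length $\ge t\}$. So I would instead bound $L_R(t,1)$ by relating it to $L_R^*(t',1)$ for $t' \le t$: an excursion contributing to $L_R(t,1)$ stays in $[0,R]$, so it equals its own truncation and contributes to $L_R^*(t,1)$ with the same weight; hence $L_R(t,1) \le L_R^*(t,1) \le L_R^*(t)$, and we are done with $C = 1$.

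Given that this argument yields the bound with a harmless constant and without the factor $t$, I expect the actual proof in the paper either (a) confirms exactly this, with the factor $t\,\ind_{\{t\ge R\}}$ present only because the \emph{companion} Lemma (the one bounding $L_R^*/L_R$, or $\widehat L_R$) needs it and the authors state a uniform clean bound, or (b) the real difficulty is hidden in the case $L_R^*(t) = 0$ while $L_R(t) > 0$, which cannot happen by the inclusion just described, so there is no issue. The main obstacle, therefore, is purely bookkeeping: carefully matching the normalization constants $e^{\log(3/2)n - \lambda^* t}$ across \eqref{LR}, \eqref{defPstar} and the definition of $\bP_R^*$, and checking the degenerate endpoints $R = 0$ and $t < R$. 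I would write the proof as: (i) dispose of $t < R$ and $R = 0$ by the elementary height/length constraint and the conventions; (ii) for $t \ge R \ge 1$, observe $L_R(t,0) = L_R^*(t,0)$ by \eqref{L0=L*0} and $L_R(t,1) \le L_R^*(t,1)$ because any excursion staying in $[0,R]$ is its own $T_R$-truncation; (iii) add to get $L_R(t) \le L_R^*(t)$, which is the claim with $C = 1$ (any $C \ge 1$ works, and the factor $t$ can be dropped or kept for uniformity with later estimates).
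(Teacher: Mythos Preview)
Your argument has a genuine gap. The claimed inequality $L_R(t,1)\le L_R^*(t,1)$ does not follow from ``any excursion staying in $[0,R]$ is its own $T_R$-truncation.'' Recall that $\bP^*$ is a law on $\cV_\infty$, the set of non-negative excursions \emph{ending at $0$}, and $\bP_R^*$ is its pushforward under $T_R$. A path contributing to $L_R(t,n,1)$ ends at $R>0$, so it is not an element of $\cV_\infty$ and is certainly not of the form $T_R V$ with $T_R V=V$. The only $V\in\cV_\infty$ with $(T_RV)_N=R$ are those that genuinely exceed $R$: for such $V$ the truncation keeps the prefix up to time $\tau_R$ and replaces the last value by $R$. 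Thus $L_R^*(t,n,1)$ weighs \emph{prefixes of longer $\cV_\infty$-excursions whose $n$-th step overshoots $R$}, not the length-$n$ paths ending exactly at $R$ that $L_R(t,n,1)$ weighs. For a fixed prefix $(0,v_1,\dots,v_{n-1})$, the $\bP^*$-mass of all $V\in\cV_\infty$ with that prefix and $V_n\ge R+1$ carries an extra factor of the form $\sum_{k\ge 1}(2e^{\lambda^*})^{-k}\,H(R+k)$ (with $H(y)$ the tilted weight of the continuation from $y$ back to $0$), and there is no reason this should be $\ge 1$. So your ``inclusion'' fails, and with it the conclusion $L_R(t)\le L_R^*(t)$.

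The paper never compares $L_R(\cdot,1)$ with $L_R^*(\cdot,1)$. Instead it proves the purely combinatorial estimate $L_R(t,1)\le 2t\,L_R(t,0)$ via a reflection map $G_{n,t}^R$: each path in $[0,R]$ ending at $R$ is reflected across level $R/2$ after its first crossing of that level, producing a path in $[0,R]$ ending at $0$ (with a small local surgery when the crossing overshoots $R/2$, which adds two horizontal steps). This map is at most $t$-to-one, which is exactly where the factor $t$ comes from. One then concludes using $L_R(t,0)=L_R^*(t,0)\le L_R^*(t)$. So the factor $t$ in the statement is not cosmetic; it is the honest cost of the comparison, and your claimed bound with $C=1$ is unsupported.
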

\bpr
First, observe that for $t<R$, a path of length $t$ cannot reach level $R$. Therefore, $L_{R}(t,n,1)=L^*_R(t,n,1)=0$ and $L_{R}(t,n,1)=L^*_R(t,n,1)$. It only remains to consider $t\geq R$, and it suffices to show that $L_R(t,1)\leq C t  L_{R}(t,0)=C tL_R^*(t)$. For simplicity we only consider the case $R\in 2\N$, but the case $R\in 2\N+1$ can be treated in a similar manner. Let
\begin{align}\label{ABR}
\cB_{n,t}^{R}:=& \Big\{(V_i)_{i=0}^n\colon\, V_0=0, V_i\in \{0,\dots,R\}\ \forall i\in \{0,\dots,n\}, V_n=R, \sum_{i=1}^n|U_i|=t-n\Big\},\\\label{HR1}
\cA_{n,t}^{R}:=& \Big\{(V_i)_{i=0}^n\colon\, V_0=0, V_i\in \{0,\dots,R\}\ \forall i\in \{0,\dots,n\}, V_n=0,  \sum_{i=1}^n|U_i|=t-n\Big\}.
\end{align}
We define a map $G_{n,t}^R: \cB_{n,t}^R\mapsto \cA_{n,t}^R\cup \cA_{n+2,t}^R$ as follows. For $V\in \cB_{n,t}^R$, let $\tau_{R/2}:=\min\{i\geq 1\colon\, V_i\geq R/2\}$. We distinguish between two cases (see Figure \ref{fig3}):
\begin{enumerate}
\item If $V_{\tau_{R/2}}=R/2$, then define $G_{n,t}^R(V)$ by simply reflecting $V$ across $R/2$ from $\tau_{R/2}$ onward, i.e.,
$G_{n,t}^R (V)_i=V_i$ for $i\leq \tau_{R/2}$ and  $G_{n,t}^R(V)_i=R-V_i$ for $i\in \{\tau_{R/2},\dots,n\}$. Then,  $G_{n,t}^R (V)\in
\cA_{n,t}^{R}$.\\
\item If $V_{\tau_{R/2}}=R/2+y$ with $y\in \{1,\dots,\frac{R}{2}\}$, then let $G_{n,t}^R(V)_i=V_i$ for $i\leq \tau_{R/2}-1$,
$G_{n,t}^R (V)_{\tau_{R/2}}=\frac{R}{2}-1$, $G_{n,t}^R(V)_i=R-V_{i-1}$ for $i\in \{\tau_{R/2}+1,\dots,n+1\}$ and $G_{n,t}^R(V)_{n+2}:=0$. Then,  $G_{n,t}^R(V)\in \cA_{n+2,t}^R$.
\end{enumerate}
Note that under $G_{n, t}^R$, every $V\in G_{n,t}^R(\cB_{n,t}^R)\cap \cA_{n,t}^R$ has a unique pre-image in $\cB_{n,t}^R$, and every $V\in G_{n,t}^R(\cB_{n,t}^R)\cap \cA_{n+2,t}^R$ has at most $n\leq t$ pre-images in $\cB_{n,t}^R$, one for each time that $V$ is at level $\frac{R}{2}-1$. Finally, we note that in the second case,  $G_{n,t}^R(V)$ has two fewer vertical steps and two more horizontal steps than $V$. This allows us to write
\begin{align}\label{grandb}
L_R(t,1)&=\sum_{n=1}^t \bE\Big[ e^{\log(\frac{3}{2}) n-\lambda^* t} \ \ind_{\cB_{n,t}^R}(V)\Big]\\
\nonumber &\leq \sum_{n=1}^t  \bE\Big[ e^{\log(\frac{3}{2}) n-\lambda^* t} \ \ind_{\cA_{n,t}^R}(V)\Big]+
t\,  \bE\Big[ e^{\log(\frac{3}{2}) (n+2)-\lambda^* t} \ \ind_{\cA_{n+2,t}^R}(V)\Big].
\end{align}
Observe that the r.h.s.\ in \eqref{grandb} is less than $\sum_{n=1}^t  L_R(n,t,0)+t L_R(n+2,t,0)$, which implies
\begin{align}\label{grandb2}
L_R(t,1)&\leq 2 t L_R(t,0).
\end{align}
This concludes the proof of the lemma.
\epr

\begin{figure}[t]
\begin{subfigure}{1\textwidth}
\centering
\includegraphics[scale=1.4]{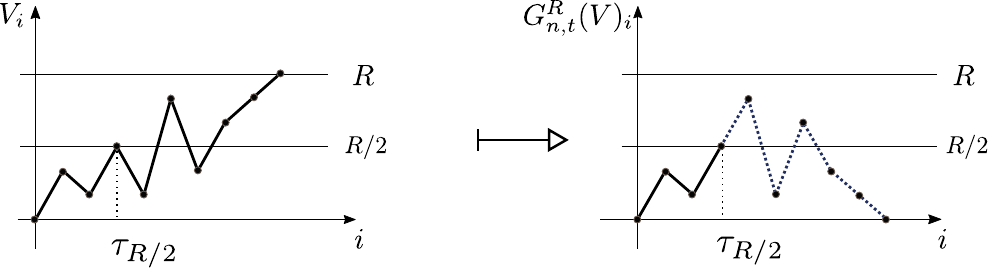}
\label{subfig31}
\caption{}
\end{subfigure}

\vspace{0.7cm}

\begin{subfigure}{1\textwidth}
\centering
\includegraphics[scale=1.4]{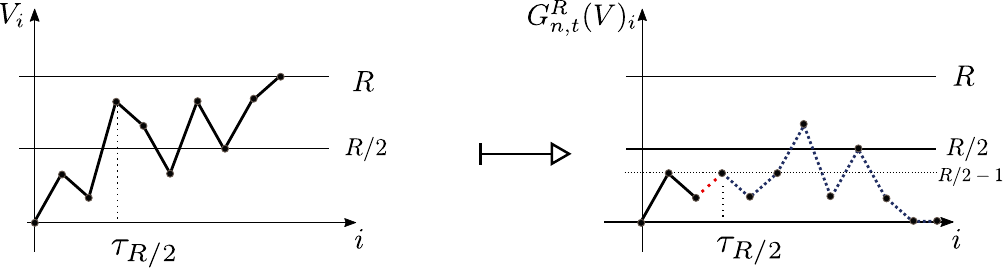}
\caption{}
\label{subfig32}
\end{subfigure}
\caption{The transformation $G_{n,t}^R(V)$. We let $\tau_{R/2}:=\min\{i\ge 1\colon V_i \ge R/2\}$. 
In $(\textrm a)$ we draw the case in which $V_{\tau_{R/2}}=R/2$. In this case we define
 $G_{n,t}^R(V)$ by simply reflecting $V$ across $R/2$ from $\tau_{\tau_{R/2}}$ onward (in blue, dotted). 
 In $(\textrm b)$ we draw the case in which $V_{\tau_{R/2}}> R/2$. In this second case we let $G_{n,t}^R (V)_{\tau_{R/2}}=\frac{R}{2}-1$ (in red, dotted) and we concatenate the reflection of $V$ across $R/2$ from $\tau_{R/2}$ onward. We add a final point $G_{n,t}^R(V)_{n+2}:=0$ (in blue, dotted).
 }
 \label{fig3}
\end{figure}

\medskip

\begin{figure}
\centering
\includegraphics[scale=1.5]{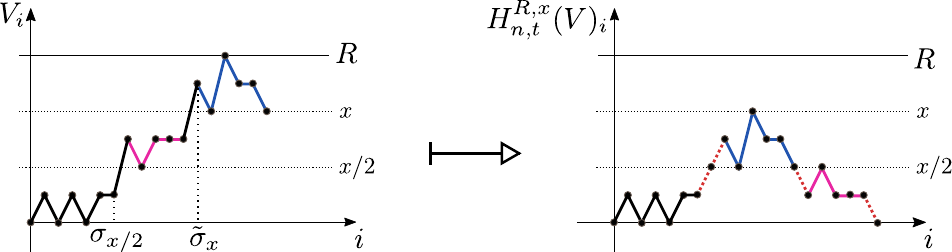}
\caption{The transformation $H_{n,t}^{R,x}(V)$: We fix $x\le R$ even and we consider a path $V$ ending at $V_n=x$. We let
$\sigma_{x/2} :=\max\{i\geq 0\colon\, V_i<\tfrac{x}{2}\}$ and $\tilde \sigma_{x} :=\min\{i\geq \sigma_{x/2}+1 \colon\, V_i\geq x\}.$
In the figure we draw the transformation $H_{n,t}^{R,x}(V)$ when $V_{\sigma_{x/2}+1}>x/2$ and $V_{\widetilde \sigma_{x}}>x$.
In this case we define $H_{n,t}^{R,x}(V)_{\sigma_{x/2}+1}:=x/2$. Then we take the piece of $V$ on the interval $[\widetilde \sigma_x, n]$
 lowered by $x/2$ and we insert it at time $\sigma_{x/2}+2$ (blue). Finally the piece of $V$ on the interval $[\sigma_{x/2}+1, \widetilde \sigma_x-1]$ is reflected across $x/2$ and reattached at the end (violet). We add a final point $H_{n,t}^{R,x}(V)_{n+2}:=0$.}
\label{fig4}
\end{figure}

To bound the last ratio in \eqref{probeve}, we will bound $\widehat{L}_R(t)/ \bP_R^*\big( T \geq t\big)$, which arises from the last incomplete excursion in the excursion decomposition. Recall that $\widehat{L}_R(0):=1=\bP_R^*(T> 0)$.

\begin{lemma}\label{Cbonudlast}
There exists $C>0$ such that
\be{rnd2}
\frac{\widehat L_R(t)}{\bP_R^*( T> t)}\leq CRt^2  \quad \mbox{ for all } R, t\in \N.
\ee
\end{lemma}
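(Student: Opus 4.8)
The plan is to bound the numerator $\widehat L_R(t)$ by comparison with the complete-excursion measure $L_R^*(t)= L_R(t,0)+L_R(t,1) = \bP_R^*(T=t)$, and separately to bound the denominator $\bP_R^*(T>t)$ from below by $\bP_R^*(T=t)$ up to a polynomial factor. Concretely, I would first show
$$\widehat L_R(t) \le C\, R\, t\, \bigl(L_R(t,0)+L_R(t,1)\bigr) = C\, R\, t\, \bP_R^*(T=t),$$
and then show the matching lower bound $\bP_R^*(T>t) \ge c\, t^{-1}\,\bP_R^*(T=t)$; dividing gives \eqref{rnd2}. The first inequality is the analogue, for incomplete excursions, of Lemma \ref{Cbonud}, and should be proved by the same type of surgery: given an effective walk path $V$ with $0<V_n<R$ (so it contributes to $\widehat L_R(t,n)$), I want to modify it near a level $x:=V_n$ to produce a path ending at $0$ or at $R$ with the same length $t$, controlling both the number of pre-images (polynomial in $t$) and the change in the exponential weight $e^{\log(3/2)n-\lambda^* t}$ (bounded, since the surgery changes $n$ by $O(1)$). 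The transformation $H_{n,t}^{R,x}$ depicted in Figure \ref{fig4} is exactly this map: it cuts $V$ at the last time $\sigma_{x/2}$ below $x/2$ and the first subsequent time $\widetilde\sigma_x$ reaching $x$, splices the tail segment (lowered by $x/2$) in early, and reattaches the reflected middle segment at the end, producing a path in $\cA_{n+2,t}^R$ (or analogously in $\cB^R$ after reflecting across $R/2$). Each such image has at most $t$ pre-images (one per visit to the relevant level), and summing over the $O(R)$ possible endpoint values $x$ accounts for the extra factor $R$; so $\widehat L_R(t)\le C R t\,\bigl(L_R(t,0)+L_R(t,1)\bigr)$.

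For the denominator I would argue that a complete excursion of length $t$ can be turned into a strictly longer one at a bounded weight cost: take any $V$ counted by $\bP_R^*(T=t)$, and at its very first step (or first return to $0$) insert an extra excursion increment of a fixed small size, e.g. replace the first increment $U_1$ by $U_1$ together with one extra up-down pair, increasing $T$ by a bounded amount while multiplying the $\bP_R^*$-weight by a constant bounded below. More cleanly, since under $\bP^*$ the increment law has full support on $\Z$ and the truncation $T_R$ only ever shortens paths, one has $\bP_R^*(T=t+j) \ge c_j\,\bP_R^*(T=t)$ for each fixed small $j$ uniformly in $R,t$; summing $j$ over a bounded range — or just over a range of length $\asymp t$ and using that $K^*$ (hence $L_R^*$) has exponential tails by Remark \ref{remKK} — yields $\bP_R^*(T>t)\ge c\,t^{-1}\bP_R^*(T=t)$. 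Actually it is cleaner still: by Remark \ref{remKK}, $\bP_R^*(T=s)$ decays at most like a geometric, and is bounded below on an interval of length of order $t$ after $t$ (e.g.\ one shows $\bP_R^*(T=s)\ge c\,\bP_R^*(T=t)$ for $s\in[t,2t]$ by the insertion argument iterated a bounded number of times and comparison of weights), so $\bP_R^*(T>t)\ge \sum_{s=t+1}^{2t}\bP_R^*(T=s)\ge c\,\bP_R^*(T=t)$, which even removes one power of $t$ — but $CRt^2$ is all that is claimed, so the crude bound suffices.

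The main obstacle I anticipate is the bookkeeping in the surgery map $H_{n,t}^{R,x}$: one must verify that (i) the image genuinely lands in a union of finitely many $\cA^R_{n',t}$ or $\cB^R_{n',t}$ with $n'=n+O(1)$ and the same total length $t$, so the $e^{\log(3/2)n-\lambda^* t}$ weights are comparable (here the constraint $\sum|U_i|=t-n$ forces the correct relation between steps added and horizontal/vertical extension), (ii) the multiplicity of pre-images is at most $t$ — this needs the observation that a pre-image is determined by choosing the reattachment time, which is a visit to a fixed level, of which there are at most $t$ — and (iii) the case analysis (whether $V_{\sigma_{x/2}+1}=x/2$ or $>x/2$, and whether $V_{\widetilde\sigma_x}=x$ or $>x$, and whether $x\le R/2$ or $x>R/2$, mapping into $\cA^R$ vs.\ $\cB^R$) is exhaustive. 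None of these steps is deep, but getting the constants to be uniform in $R$ and $t$ requires care; once that is in place, dividing the two bounds gives \eqref{rnd2} with room to spare.
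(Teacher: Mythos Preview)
Your approach is the paper's approach --- the surgery $H_{n,t}^{R,x}$ of Figure~\ref{fig4} is exactly what the paper uses --- but two details are off, and fixing them actually simplifies the argument considerably.

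First, the multiplicity of $H_{n,t}^{R,x}$ is $t^2$, not $t$. The map rearranges three segments of $V$ (the prefix up to $\sigma_{x/2}$, the lowered tail from $\tilde\sigma_x$ onward, and the reflected middle piece), so inverting it requires locating \emph{two} gluing times in the image, not one; each is a visit to level $x/2$, and there is no canonical way to single out which visit is which. This yields $\widehat L_R(t)\le C R\,t^2\, L_R(\,\cdot\,,0)$, not $CRt$. Second, and this is the point that makes your denominator discussion unnecessary: the surgery adds two horizontal steps while preserving the total vertical displacement $\sum_i|U_i|$, so the image lands in $\cA_{n+2,\,t+2}^R$, i.e.\ at total length $t+2$, not $t$. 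Hence the paper proves
\[
\widehat L_R(t)\ \le\ C R\,t^2\, L_R(t+2,0),
\]
and since $L_R(t+2,0)=L_R^*(t+2,0)\le \bP_R^*(T=t+2)\le \bP_R^*(T>t)$ by \eqref{L0=L*0}, the bound \eqref{rnd2} follows immediately --- no insertion argument, no comparison of $\bP_R^*(T=s)$ over $s\in[t,2t]$, and no splitting into $\cA^R$ versus $\cB^R$ (the map always lands in $\cA^R$). One more small slip: the identity $L_R^*(t)=L_R(t,0)+L_R(t,1)$ is false in general; the $\cE=1$ part of $L_R^*$ counts \emph{truncated} paths, which is not the same as paths in the slab that genuinely end at $R$. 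Only the $\cE=0$ parts agree, and that is all the paper uses.
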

\bpr
Recall $\widehat L_R(t)$ from \eqref{hatLR}. It suffices to show that there exists $C>0$ such that
\be{supnu}
\widehat L_R(t)\leq C R\,  t^2 L_R(t+2,0),
\ee
since
$$
\bP_R^*\big( T > t\big)=\sum_{j> t}L^*_R(j)\geq \sum_{j> t} L_R^*(j,0)=\sum_{j> t} L_R(j,0)\geq L_R(t+2,0).
$$
For $x\in \{1,\dots,R-1\}$ and $n\leq t$, we consider the set of effective random walk trajectories
\begin{align}
\cD_{n,t}^{R,x}=& \Big\{(V_i)_{i=0}^n\colon\, V_0=0, V_i\in \{0,\dots,R\}\ \forall i\in \{0,\dots,n\}, V_n=x, \sum_{i=1}^n|U_i|=t-n\Big\}.
\end{align}
For simplicity, we assume that $x$ is even, but the case $x$ odd can be treated similarly. Let
\be{defsigma}
\sigma_{x/2} :=\max\{i\geq 0\colon\, V_i<\tfrac{x}{2}\}\quad \text{and}\quad
\tilde \sigma_{x} :=\min\{i\geq \sigma_{x/2}+1 \colon\, V_i\geq x\}.
\ee

We define a map $H_{n,t}^{R,x}:\cD_{n,t}^{R,x}\to \cA_{n+2,t+2}^{R}$ (cf. \eqref{HR1}) as follows. Let $V\in\cD_{n,t}^{R,x}$. We distinguish between four cases:
\begin{enumerate}
\item $V_{\sigma_{x/2}+1}>x/2$ and $V_{\widetilde \sigma_{x}}>x$,
\item $V_{\sigma_{x/2}+1}>x/2$ and $V_{\widetilde \sigma_{x}}=x$,
\item $V_{\sigma_{x/2}+1}=x/2$ and $V_{\widetilde \sigma_{x}}>x$,
\item $V_{\sigma_{x/2}+1}=x/2$ and $V_{\widetilde \sigma_{x}}=x$.
\end{enumerate}
We will treat case 1 only, where $H_{n,t}^{R,x}$ maps $V$ to a path in $\cA_{n+2,t+2}^{R}$ (see Figure \ref{fig4}). Cases 2--4 are similar and even simpler, and to ensure
that $H_{n,t}^{R,x}(V) \in \cA_{n+2,t+2}^{R}$, we can add extra horizontal steps if needed. Roughly speaking, under $H_{n,t}^{R,x}$, the piece of $V$ on the interval $[\widetilde \sigma_x, n]$ is lowered by $x/2$ and inserted at time $\sigma_{x/2}+2$, while the piece of $V$ on the interval $[\sigma_{x/2}+1, \widetilde \sigma_x-1]$ is reflected across $x/2$ and reattached at the end. More precisely, set
\begin{align*}
& H_{n,t}^{R,x}(V)_i:=V_i \quad \text{for}\quad   i\leq \sigma_{x/2},\\
& H_{n,t}^{R,x}(V)_{\sigma_{x/2}+1}:=x/2,\\
& H_{n,t}^{R,x}(V)_{\sigma_{x/2}+1+i}:=V_{\widetilde \sigma_x +i-1}-x/2 \quad \text{for}\quad  i=1,\dots,n+1-\widetilde \sigma_x,\\
& H_{n,t}^{R,x}(V)_{n+2-(\widetilde \sigma_x-\sigma_{x/2})+i}:=x-V_{\sigma_{x/2}+i} \quad \text{for} \quad
i=1,\dots,\widetilde \sigma_x-\sigma_{x/2}-1,\\
& H_{n,t}^{R,x}(V)_{n+2}:=0.
\end{align*}
We note that the sum of absolute increments of $H_{n,t}^{R,x}(V)$ equals that of $V$, and $H_{n,t}^{R,x}(V)$ is confined to $[0, R]$.
Therefore $H_{n,t}^{R,x}(V) \in \cA_{n+2,t+2}^R$. It remains to bound the number of pre-images of every $V\in \cA_{n+2,t+2}^R\cap H_{n,t}^{R,x}(\cD_{n,t}^{R,x})$ under $H_{n,t}^{R,x}$. Note that to undo $H_{n,t}^{R,x}(V)$, we only need to find the two times $\sigma_{x/2}+2$
and $\sigma_{x/2}+n+2-\widetilde \sigma_x$ at which the original segments of $V$ are glued together and $H_{n,t}^{R,x}(V)_i=0$. Since there are at most $n^2\leq t^2$ such choices, and combined with similar estimates for cases 2--4, we have
\begin{align}
 \widehat L_R(t,n)&= \sum_{x=1}^{R-1} \bE\Big[e^{\log(\frac{3}{2}) n-\lambda^*t} \, \ind_{\, \cD_{n,t}^{R,x}} (V)\Big]\\
\nonumber & \leq  4 (R-1)\,  t^2 \Big( 3^2 e^{-2 \log(\frac{3}{2})+2\lambda^*}\bE\Big[e^{\log(\frac{3}{2}) (n+2)-\lambda^*(t+2)} \, \ind_{\, \cA_{n+2,t+2}^{R}} (V)\Big]\Big)\\
\nonumber & \leq  C R\,  t^2 L_{R}(t+2,n+2,0),
\end{align}
which establishes \eqref{supnu} and hence the lemma.
\epr

\smallskip

As a corollary of Lemma \ref{Cbonudlast}, we have the following bound on the last ratio in \eqref{probeve}:
\be{nrdcor}
\frac{\widehat L_{R_{\gamma_L}}(L-(T_1+\cdots +T_{\gamma_L}))}{\bP_{R_{\gamma_L}}^*\big( T> L-(T_1+\cdots+T_{\gamma_L})\big)}\leq C\, L(L-(T_1+\cdots+T_{\gamma_L}))^2.
\ee

\section{Proof of Theorems \ref{unifscal} and \ref{unifscal2}}\label{sec5}
We will use the excursion decomposition developed in Section \ref{pw}, in particular, the representation in \eqref{probeve}.
First we show that for large $L$, a uniform prudent walk typically crosses its range at most $\log L$ times. Namely,

\begin{lemma}\label{finiteexc}
There exists $\delta>0$ such that
\be{excufi}
\lim_{L\to\infty} \bP_{{\rm unif},L}\big[\exists\, i\in \{\delta \log L,\dots,\gamma_L(\pi)\}\colon\, \cE_i(\pi)=1\big]=0.
\ee
\end{lemma}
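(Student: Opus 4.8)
The plan is to use the representation \eqref{probeve} together with the estimates from Lemmas \ref{Cbonud} and \ref{Cbonudlast}, and to exploit the fact that a crossing excursion is ``expensive'' under the effective random walk measure. First I would set up the basic dichotomy: either $\gamma_L$ itself is already at least $\delta\log L$ and there is a crossing among the late excursions, or $\gamma_L$ is of order at least $\delta\log L$ with many crossings spread out. The key probabilistic input is an estimate of the form $L_R^*(T, \cE = 1) = \bP_R^*(V_N = R) \le q^R$ or, more usefully, $\sum_t L_R(t,1) \le c\, q$ uniformly in $R$ for some $q < 1$: a crossing excursion in a slab of width $R$ requires the effective walk, whose increments have the Laplace law \eqref{lawP} under the (exponentially tilted) excursion measure $\bP^*$, to traverse the whole slab. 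Since under $\bP^*$ the excursion has exponential tails (Remark \ref{remKK}), the probability of crossing a slab of width $R \ge 1$ is bounded by some fixed $q<1$, and in fact decays in $R$; what matters is only the uniform bound $q<1$ for $R\ge 1$.

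Next I would observe that by construction (see the end of Section \ref{secEL} and Section \ref{decppe}) the slab width $R_i$ is non-decreasing in $i$ along each parity class: $R_i = R_{i-2} + N_i \ge R_{i-2}$, and it is $\ge 1$ as soon as the first genuine vertical (resp.\ horizontal) extension has occurred, which happens within the first two excursions. Therefore, for $i$ beyond some fixed constant, every crossing excursion contributes a multiplicative factor bounded by $q<1$ relative to a non-crossing one in the Radon--Nikodym ratio $L_{R_{i-1}}(T_i,N_i,\cE_i)/L_{R_{i-1}}^*(T_i,N_i,\cE_i)$ — more precisely, using \eqref{L0=L*0} and Lemma \ref{Cbonud}, one converts the event $\{\cE_i = 1\}$ into a cost. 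Concretely, I would bound
\be{planbound}
\bP_{{\rm unif},L}\big[\exists\, i\in\{\delta\log L,\dots,\gamma_L\}\colon \cE_i=1\big]
\le \sum_{i\ge \delta\log L} \bP_{{\rm unif},L}\big[\cE_i = 1\big],
\ee
and then estimate each term using \eqref{probeve}: the numerator acquires a factor from the crossing excursion that is summable and, crucially, when combined with the polynomial corrections $C\,T_i$ from Lemma \ref{Cbonud} and $C L(L-\sum T_j)^2$ from \eqref{nrdcor}, still produces a bound that decays geometrically in $i$ (since the exponential tails of the $\widetilde T_i$ under $\bP^*$ swallow the polynomial factors). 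Summing the geometric series from $i = \delta\log L$ gives something like $L^{-c\delta}$ times a polynomial in $L$ coming from the last-excursion ratio, which tends to $0$ once $\delta$ is chosen large enough.

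The main obstacle I anticipate is controlling the denominator $\bP^*[\,\widetilde T_1 + \cdots + \widetilde T_{\widetilde\gamma_L} = L\,]$ (equivalently the last-excursion ratio in \eqref{probeve}) without losing too much: a priori this renewal-type probability is only of order $L^{-1/2}$ or so, while the bound \eqref{nrdcor} is as bad as $L^3$. The resolution is that one does not need the sharp behaviour — any polynomial lower bound on the renewal probability (or equivalently the polynomial upper bound \eqref{nrdcor}) is fine, because the geometric gain $q^{\,\delta\log L} = L^{\,\delta\log q}$ from the crossing excursions beats any fixed power of $L$ once $\delta$ is large. A secondary technical point is handling the event $\{\gamma_L \ge \delta\log L\}$ versus $\{\gamma_L < \delta\log L\}$ cleanly: when $\gamma_L$ is small the sum in \eqref{planbound} is empty, so there is nothing to prove; when $\gamma_L$ is large one must make sure the union bound over $i$ up to $\gamma_L$ (itself random and a priori as large as $L$) is still summable, which again follows from the geometric decay in $i$. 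I would also need to double-check that the crossing-cost estimate survives the truncation map $T_R$ relating $\widetilde V^{(i)}$ to $V^{(i)}$, but this is built into the definition of $\bP_R^*$ and the identity \eqref{L0=L*0}, so it should be routine.
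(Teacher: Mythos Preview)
Your overall architecture (union bound over $i\ge\delta\log L$, representation \eqref{probeve}, Lemmas \ref{Cbonud} and \ref{Cbonudlast} for the ratios, then beat the polynomial losses by an exponential gain) matches the paper's. But the mechanism you single out for that exponential gain is wrong: you write that ``what matters is only the uniform bound $q<1$ for $R\ge1$'' on the crossing probability. That is not enough. A uniform bound on each term of your union bound would make the sum at most $q\cdot\gamma_L\le qL$, which diverges; it certainly does not ``decay geometrically in $i$''. The correct mechanism, and the one the paper uses, is that $R_{i-1}$ grows \emph{linearly} in $i$: since $N_j\ge1$ for every $j$, one has $R_i=R_{i-2}+N_i\ge R_{i-2}+1$, hence $R_i\ge(i-1)/2$; the crossing event $\{\cE_i=1\}$ then forces $T_i\ge R_{i-1}+1\ge i/2$. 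Combined with $T_i\le\widetilde T_i$ and the exponential tail from Remark \ref{remKK}, this gives the $i$-th term size $e^{-ci}$, and the sum from $\delta\log L$ is of order $L^{-c\delta}$. Your aside that the crossing probability ``in fact decays in $R$'' is precisely the right thread, but you then explicitly discard it, and you only record $R_i\ge1$ rather than linear growth.

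A second point you leave unaddressed is the control of the full Radon--Nikodym product in \eqref{probeve}, not just the $i$-th factor. By \eqref{L0=L*0} the $j$-th factor equals $1$ when $\cE_j=0$, but on the event $\{\cE_i=1\}$ there may be other indices $j$ with $\cE_j=1$, each contributing a factor as large as $CT_j$ by Lemma \ref{Cbonud}. The paper handles this via an inductive conditioning argument (Lemma \ref{lemmaKey}) that replaces each factor by $1+c\,\widetilde T_j\,\ind_{\{\widetilde T_j\ge(j-1)/2\}}$; independence of the $\widetilde T_j$ together with \eqref{Estar} then makes the product of expectations uniformly bounded in $L$. Your phrase ``polynomial corrections $CT_i$'' gestures at this but does not supply an argument. (Your worry about the denominator is also overly pessimistic: restricting to trajectories where every ratio equals $1$ and invoking the renewal theorem yields a uniform positive lower bound, cf.\ Claim \ref{c1}.)
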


Then we show that the total length of the first $\log L$ excursions grows less than a power of $\log L$.

\begin{lemma}\label{finitesteps}
For every $\delta>0$, there exists $\kappa>0$ such that
\be{excufi2}
\lim_{L\to \infty} \bP_{{\rm unif},L}\big[T_1(\pi)+\dots+T_{\delta \log L}(\pi)\geq  \kappa \, (\log L)^2 \big]=0.
\ee
\end{lemma}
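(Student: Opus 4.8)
The plan is to reduce the statement to a tail estimate on the i.i.d. excursion lengths $(\widetilde T_i)_{i\geq 1}$ using the representation \eqref{probeve}, and then exploit the exponential tail of $\widetilde T_1$ (Remark \ref{remKK}). First I would fix $\delta>0$ and set $A_L:=\{\pi\in\Omega_L\colon T_1(\pi)+\dots+T_{\delta\log L}(\pi)\geq \kappa(\log L)^2\}$, which depends only on $(T_i,N_i,\cE_i)_{i=1}^{\gamma_L}(\pi)$. Applying \eqref{probeve} and dividing by the same identity applied to $A=\Omega_L$, we get
\be{ratioo}
\bP_{{\rm unif},L}(A_L)=\frac{\bE^*\Big[\ind_{\{(T_i,N_i,\cE_i)_{i=1}^{\gamma_L}\in A_L\}} \prod_{i=1}^{\gamma_L} \frac{L_{R_{i-1}}(T_i,N_i,\cE_i)}{L_{R_{i-1}}^*(T_i,N_i,\cE_i)}\cdot \frac{\widehat L_{R_{\gamma_L}}(L-(T_1+\dots+T_{\gamma_L}))}{\bP_{R_{\gamma_L}}^*(T>L-(T_1+\dots+T_{\gamma_L}))}\Big]}{\bE^*\Big[\prod_{i=1}^{\gamma_L}\frac{L_{R_{i-1}}(T_i,N_i,\cE_i)}{L_{R_{i-1}}^*(T_i,N_i,\cE_i)}\cdot\frac{\widehat L_{R_{\gamma_L}}(L-(\cdots))}{\bP_{R_{\gamma_L}}^*(T>L-(\cdots))}\Big]}.
\ee
The strategy is to bound the numerator from above and the denominator from below.

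For the denominator, I would restrict the expectation to the event $\cG_L$ on which no excursion crosses the range (so all $\cE_i=0$), all $T_i\leq \log L$, and $\gamma_L$ is of order $L$. On $\{\cE_i=0\}$ the ratio $L_{R_{i-1}}(T_i,N_i,0)/L_{R_{i-1}}^*(T_i,N_i,0)$ equals $1$ by \eqref{L0=L*0}, and the last ratio is bounded below by a constant since the residual length is $O(\log L)$; by the law of large numbers for $(\widetilde T_i)$ and the exponential tail from Remark \ref{remKK}, $\bP^*(\cG_L)$ is bounded away from $0$, so the denominator is $\gtrsim c>0$. For the numerator, I would use Lemma \ref{Cbonud} and \eqref{nrdcor}: each ratio $L_{R_{i-1}}(T_i)/L_{R_{i-1}}^*(T_i)$ is at most $C\,T_i$ when $\cE$ can be $1$ (and we sum over $\cE$), and the last ratio is at most $C L(L-(\cdots))^2\leq CL^3$. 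Combined with Lemma \ref{finiteexc}, we may restrict to having at most $\delta\log L$ crossings, so only $O(\log L)$ of the ratios $L_{R_{i-1}}(T_i)/L_{R_{i-1}}^*(T_i)$ can exceed $1$; hence $\prod_{i=1}^{\gamma_L}\frac{L_{R_{i-1}}(T_i)}{L_{R_{i-1}}^*(T_i)}\leq C^{\delta\log L}\prod_{i\leq \delta\log L}T_i$, a polynomial factor in $L$. The numerator is therefore at most (polynomial in $L$)$\times\,\bP^*\big(\widetilde T_1+\dots+\widetilde T_{\delta\log L}\geq \kappa(\log L)^2\big)$.

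It then remains to show that this last probability decays faster than any polynomial in $L$ once $\kappa$ is large enough. Since $\widetilde T_1$ has exponential moments, a standard Chernoff/large-deviations estimate gives $\bP^*\big(\sum_{i=1}^{\delta\log L}\widetilde T_i\geq \kappa(\log L)^2\big)\leq e^{-c\kappa\log L}$ for $\kappa$ large (the sum of $\delta\log L$ light-tailed variables being order $\log L$, a value of order $(\log L)^2$ is deep in the tail), which beats any fixed power of $L$. Feeding this back into \eqref{ratioo} yields $\bP_{{\rm unif},L}(A_L)\to 0$, completing the proof. I expect the main obstacle to be the bookkeeping needed to control the numerator cleanly: one must simultaneously invoke Lemma \ref{finiteexc} to cap the number of "bad" ratios, handle the possibility that $\gamma_L$ itself is atypically large (which can be absorbed into the exponential tail of the $\widetilde T_i$), and make sure the polynomial prefactors from Lemmas \ref{Cbonud} and \ref{Cbonudlast} genuinely stay polynomial rather than, say, growing like $C^{\gamma_L}$. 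Everything else is a routine combination of the renewal representation with the exponential tail bound.
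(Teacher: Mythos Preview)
Your overall plan—express $\bP_{{\rm unif},L}(A_L)$ via \eqref{probeve}, bound the denominator below and the numerator above, and then use the exponential tail of $\widetilde T_1$—is the right one, and it is the paper's plan too. But the way you propose to control the numerator has a real gap.

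The quantity appearing inside the expectation is the \emph{pointwise} ratio $L_{R_{i-1}}(T_i,N_i,\cE_i)/L^*_{R_{i-1}}(T_i,N_i,\cE_i)$, whereas Lemma~\ref{Cbonud} only bounds the \emph{$t$-marginal} ratio $L_R(t)/L_R^*(t)$. When $\cE_i=0$ the pointwise ratio is indeed $1$ by \eqref{L0=L*0}, but when $\cE_i=1$ there is no pointwise control: $L_R(t,n,1)$ and $L^*_R(t,n,1)$ come from genuinely different mechanisms (the latter via the truncation $T_R$), and their ratio for fixed $(t,n)$ is not bounded by $Ct$. Your sentence ``and we sum over $\cE$'' hints at the fix, but summing over $(n,\cE)$ inside an $\bE^*$-expectation is precisely an integration step, and to iterate it over $i$ you must condition on the past at each stage. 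This is exactly the content of Lemma~\ref{lemmaKey}: it performs the iterative conditioning and turns the product of awkward ratios into the tractable product $\prod_i(1+c\tilde T_i\,\ind_{\{\tilde T_i\geq (i-1)/2\}})$, which is \emph{uniformly bounded} in $L$ thanks to the indicator and \eqref{Estar}. Without this lemma you do not get a usable bound on the numerator. Relatedly, your claim that ``$C^{\delta\log L}\prod_{i\leq\delta\log L}T_i$ is a polynomial factor in $L$'' is incorrect: even with $T_i\leq L$ the product can be as large as $L^{\delta\log L}$, which is super-polynomial, so the passage to ``(polynomial in $L$)$\times\bP^*(\sum\widetilde T_i\geq\kappa(\log L)^2)$'' fails.

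For comparison, the paper's proof is shorter and does not invoke Lemma~\ref{finiteexc} at all. It uses the pigeonhole observation $\{T_1+\dots+T_{\delta\log L}\geq\kappa(\log L)^2\}\subset\bigcup_{j\leq\delta\log L}\{T_j\geq\kappa\delta^{-1}\log L\}$, applies Lemma~\ref{lemmaKey} with $\Phi=\ind_{\{t_j\geq\kappa\delta^{-1}\log L\}}$ for each $j$, and then factors the resulting expectation over the independent $\widetilde T_i$; the indicator $\ind_{\{\tilde T_i\geq(i-1)/2\}}$ makes the infinite product converge, and the single remaining factor $\bE^*[(1+c\tilde T_1)\ind_{\{\tilde T_1\geq\kappa\delta^{-1}\log L\}}]$ decays like $e^{-c'\kappa\delta^{-1}\log L}$, beating the $L^3$ from \eqref{nrdcor}. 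Your denominator argument is essentially Claim~\ref{c1} and is fine.
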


Finally, we show that the last incomplete excursion of the walk typically has length at most $\log L$.

\begin{lemma}\label{lastExc}
There exists $\alpha>0$ such that
\be{excufi3}
\lim_{L\to \infty} \bP_{{\rm unif},L}\big[L-(T_1+\dots+T_{\gamma_L})\geq \alpha \log L \big]=0.
\ee
\end{lemma}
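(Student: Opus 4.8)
The plan is to pass, via the representation \eqref{probeve}, to the i.i.d.\ effective‑walk excursions under $\bP^{*}$ and then to localize on the good event provided by Lemmas~\ref{finiteexc} and~\ref{finitesteps}. Write $\ell_L:=L-(T_1+\dots+T_{\gamma_L})$ for the length of the last (incomplete) excursion, so that the event in \eqref{excufi3} is $\{\ell_L\geq\alpha\log L\}$, and recall from \eqref{probeve} that for any event $A$ measurable with respect to the complete‑excursion data $(T_i,N_i,\cE_i)_{i\leq\gamma_L}$ one has $\bP_{{\rm unif},L}(A)=\bE^{*}[\ind_A W_L]/\bE^{*}[W_L]$, where $W_L=\big(\prod_{i=1}^{\gamma_L}\rho_i\big)\widehat L_{R_{\gamma_L}}(\ell_L)/\bP^{*}_{R_{\gamma_L}}(T>\ell_L)$ and $\rho_i:=L_{R_{i-1}}(T_i,N_i,\cE_i)/L^{*}_{R_{i-1}}(T_i,N_i,\cE_i)$, with $\rho_i=1$ whenever $\cE_i=0$ by \eqref{L0=L*0}. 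Since $\Omega_L^{+}\subseteq\Omega_L$, the renewal theorem applied to $K^{*}$ (aperiodic, since $K^{*}(1),K^{*}(2)>0$) gives $\bE^{*}[W_L]=2^{-L}|\Omega_L|e^{-\lambda^{*}L}\geq 2^{-L}|\Omega_L^{+}|e^{-\lambda^{*}L}\to 1/\bE^{*}[\widetilde T_1]>0$, exactly as in the proof of Theorem~\ref{scalingtwosided}; in particular $\bE^{*}[W_L]\geq c_0>0$ for all large $L$, so it is enough to show $\bE^{*}[\ind_{\{\ell_L\geq\alpha\log L\}}\ind_{\cG_L}W_L]\to 0$ for a good event $\cG_L$ with $\bP_{{\rm unif},L}(\cG_L^{c})\to 0$.

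Fix $\delta$ as in Lemma~\ref{finiteexc} and $\kappa$ as in Lemma~\ref{finitesteps}, set $m_0:=\lceil\delta\log L\rceil$ and $S_m:=T_1+\dots+T_m$, and let $\cG_L:=\{\cE_i=0\ \forall\, m_0\leq i\leq\gamma_L\}\cap\{S_{m_0}\leq\kappa(\log L)^2\}$, which satisfies $\bP_{{\rm unif},L}(\cG_L^{c})\to 0$ by Lemmas~\ref{finiteexc}–\ref{finitesteps}. On $\cG_L$ every $\rho_i$ with $i\geq m_0$ equals $1$, and for $L$ large $\gamma_L\geq m_0$ (since $S_{m_0}\leq\kappa(\log L)^2<L$); hence on $\cG_L$, $W_L=\big(\prod_{i=1}^{m_0}\rho_i\big)Q_L$ with $Q_L:=\widehat L_{R_{\gamma_L}}(\ell_L)/\bP^{*}_{R_{\gamma_L}}(T>\ell_L)\leq C R_{\gamma_L}\ell_L^{2}\leq C L^{3}$ by \eqref{nrdcor} (using $R_{\gamma_L},\ell_L\leq L$). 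Therefore $\bE^{*}[\ind_{\{\ell_L\geq\alpha\log L\}}\ind_{\cG_L}W_L]\leq C L^{3}\,\bE^{*}\big[\ind_{\{\ell_L\geq\alpha\log L\}}\ind_{\{S_{m_0}\leq\kappa(\log L)^2\}}\prod_{i=1}^{m_0}\rho_i\big]$, and it remains to bound this last expectation.

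I would use two facts. \emph{(i)} Conditionally on $\cF_{i-1}:=\sigma((T_j,N_j,\cE_j)_{j<i})$ the triple $(T_i,N_i,\cE_i)$ has law $L^{*}_{R_{i-1}}$, so $\bE^{*}[\rho_i\mid\cF_{i-1}]\leq\sum_{t\geq 1}L_{R_{i-1}}(t)=\bP^{*}_{R_{i-1}}(\cE=0)+\sum_{t\geq 1}L_{R_{i-1}}(t,1)$; by \eqref{L0=L*0}, by $L_R(t,1)\leq 2t\,L_R(t,0)$ from \eqref{grandb2}, and by $T_i\leq\widetilde T_i$ with $\widetilde T_1$ exponentially tailed (Remark~\ref{remKK}), this is at most $1+2\,\bE^{*}[\widetilde T_1]=:C_2<\infty$, uniformly in $i$ and $R$; hence $\bE^{*}\big[\prod_{i=1}^{m_0}\rho_i\big]\leq C_2^{m_0}=L^{\delta\log C_2+o(1)}$, a fixed power of $L$. \emph{(ii)} Conditionally on $\cF_{m_0}$ the later excursion lengths satisfy $T_i\leq\widetilde T_i$ with $(\widetilde T_i)_{i>m_0}$ i.i.d.\ of law $K^{*}$ and $\widetilde T_{m+1}$ independent of $\cF_m$; since $\{\ell_L\geq j\}\subseteq\bigcup_{m\geq m_0}\{S_m\leq L-j,\ T_{m+1}>L-S_m\}$ on $\{\gamma_L\geq m_0\}$, and $\bP^{*}(T_{m+1}>L-S_m\mid\cF_m)\leq\bP(\widetilde T_1>L-S_m)\leq Ce^{-c(L-S_m)}\leq Ce^{-cj}$ on $\{S_m\leq L-j\}$, while $S_m\geq m$ caps the number of relevant $m$ by $L$, a union bound gives $\bP^{*}(\ell_L\geq j\mid\cF_{m_0})\leq CL\,e^{-cj}$ on $\{S_{m_0}\leq\kappa(\log L)^2\}$. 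Conditioning on $\cF_{m_0}$ (the factor $\ind_{\{S_{m_0}\leq\kappa(\log L)^2\}}\prod_{i\leq m_0}\rho_i$ being $\cF_{m_0}$‑measurable) and taking $j=\alpha\log L$ then yields
\[
\bE^{*}\Big[\ind_{\{\ell_L\geq\alpha\log L\}}\ind_{\{S_{m_0}\leq\kappa(\log L)^2\}}\prod_{i=1}^{m_0}\rho_i\Big]
\leq CL^{1-c\alpha}\,\bE^{*}\Big[\prod_{i=1}^{m_0}\rho_i\Big]
\leq CL^{1-c\alpha+\delta\log C_2+o(1)},
\]
so that $\bE^{*}[\ind_{\{\ell_L\geq\alpha\log L\}}\ind_{\cG_L}W_L]\leq CL^{4-c\alpha+\delta\log C_2+o(1)}\to 0$ once $\alpha$ is chosen large enough (recall $\delta$ is fixed), which completes the proof.

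The step I expect to be the main obstacle is the control of the Radon–Nikodym product $\prod_{i=1}^{\gamma_L}\rho_i$: a priori $\gamma_L$ is of order $L$, and because $\bE^{*}[\rho_i\mid\cF_{i-1}]$ is only bounded by a constant $C_2>1$ — not by $1$ — the full product would be exponentially large; Lemma~\ref{finiteexc} is exactly what saves the argument, since it confines all crossing excursions (the only indices with $\rho_i\neq 1$) to the first $O(\log L)$ positions, reducing the product to $\prod_{i=1}^{m_0}\rho_i$ whose expectation is merely polynomial in $L$ via $\sum_t L_R(t)\leq C_2$. The polynomial losses $L^{3}$ from $Q_L$ (Lemma~\ref{Cbonudlast}, with $R,\ell_L\leq L$) and $L^{1-c\alpha}$ from the renewal step are then absorbed by enlarging $\alpha$; a secondary point requiring care is the quantitative use of Lemmas~\ref{finiteexc}–\ref{finitesteps} to discard $\cG_L^{c}$ after dividing by $\bE^{*}[W_L]\geq c_0$.
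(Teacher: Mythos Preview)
Your argument is correct, but it takes a genuinely different route from the paper's. The paper's proof is self-contained and does \emph{not} invoke Lemmas~\ref{finiteexc} or~\ref{finitesteps}: it bounds the full Radon--Nikodym product $\prod_{i=1}^{\gamma_L}\rho_i$ in one stroke via Lemma~\ref{lemmaKey}, after observing that $\ell_L\geq\alpha\log L$ forces $\max\{T_1,\dots,T_L\}\geq\alpha\log L$ (since $T_{\gamma_L+1}>\ell_L$ and $\gamma_L+1\leq L$), and then uses the exponential tail of $\tilde T_1$ exactly as in \eqref{B1}--\eqref{trio}. By contrast, you first localize on the good event $\cG_L$ provided by Lemmas~\ref{finiteexc}--\ref{finitesteps}, which collapses the product to its first $m_0=O(\log L)$ factors, and then control these with the coarser but simple bound $\bE^*[\rho_i\mid\cF_{i-1}]=\sum_{t}L_{R_{i-1}}(t)\leq 1+2\bE^*[\tilde T_1]$ obtained from \eqref{L0=L*0} and \eqref{grandb2}; your subsequent union bound over $m$ is essentially the same ``some $T_{m+1}$ is large'' observation the paper uses. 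Your lower bound on the denominator via $\Omega_L^+\subseteq\Omega_L$ and the renewal theorem for $K^*$ is in fact slicker than the explicit construction in Claim~\ref{c1}. The trade-off: the paper's approach is shorter, gives a cleaner exponent, and keeps the three lemmas logically independent (all resting directly on Lemma~\ref{lemmaKey} and Claim~\ref{c1}); your approach avoids a direct appeal to Lemma~\ref{lemmaKey} here but only by importing its consequences through the other two lemmas, at the cost of the extra polynomial factor $L^{\delta\log C_2}$ which you then absorb by enlarging $\alpha$.
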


We prove Theorem \ref{unifscal} next using Lemmas \ref{finiteexc}--\ref{lastExc}, whose proof are postponed to Sections \ref{dec:excu}--\ref{dec:excu3}.

\subsection{Proof of Theorems \ref{unifscal} and \ref{unifscal2}}\label{Theo1}

Let $\delta,\kappa,\alpha>0$, and we define $\cG_L\subset \Omega_L$ by
$$
\cG_{L}:=\Big\{\cE_i=0\, \forall\, i\in \{\delta \log L,\dots, \gamma_L\}, \,
        T_1+\dots+T_{\delta \log L}\leq \kappa (\log L)^2, \  L-(T_1+\dots+T_{\gamma_L})\leq \alpha \log L \Big\} .
$$
By Lemmas \ref{finiteexc}--\ref{lastExc}, we can choose  $\delta, \kappa$ and $\alpha$ such that
$\lim_{L\to\infty}\bP_{\text{unif},L}\big( \cG_L \big)=1$.
\smallskip

We introduce a little more notation. Let $\cO :=\{\text{NE,NW,SE,SW}\}$ be the set of possible directions of a 2-sided prudent path. For $o\in \cO$ let $\Omega_L^{\, o}$ be the set of $L$-step 2-sided path with orientation $o$ (e.g.
$\Omega_L^{\text{NE}}=\Omega_L^+$). Pick $\pi \in \Omega_L$ and recall that the endpoint of each excursion of $\pi$ lies at one of the
4 corners (indexed in $\cO$) of the smallest rectangle containing the range of $\pi$ up to that endpoint.
Thus,  for $\pi \in \cG_L$, we denote by $\theta(\pi)\in \cO$ the corner at which the endpoint of the
$\delta \log L$-th excursion lies.
\smallskip


For a path $\pi\in \cG_L$, let $\sigma_1:=T_1+\dots+ T_{\delta\log L}$ be the length of the first $\delta\log L$ excursions, and
let $\sigma_2:=L-(T_1+\dots + T_{\gamma_L})$ be the length of the last incomplete excursion. Note that $(\pi_i)_{i=\sigma_1}^{L-\sigma_2}$ is a 2-sided prudent path of orientation  $\theta(\pi)$ because $\cE_i=0$ for $\delta \log L<i \leq  \gamma_L(\pi)$. Therefore, we can safely enlarge a bit $\cG_L$ into
$$
\widetilde \cG_{L}:=\Big\{(\pi_i)_{i=\sigma_1}^{L-\sigma_2} \in \Omega_{L-\sigma_1-\sigma_2}^{\theta(\pi)}, \,
        T_1+\dots+T_{\delta \log L}\leq \kappa (\log L)^2, \  L-(T_1+\dots+T_{\gamma_L})\leq \alpha \log L \Big\} .
$$
Note that  conditioned on $\pi\in \widetilde \cG_L$, $\sigma_1(\pi)=m$, $\sigma_2(\pi)=n$, and $\theta(\pi) = o$, the law of $(\pi_i)_{i=m}^{L-n}$ under $\bP_{\text{unif},L}$ (modulo translation and rotation) is exactly that of a uniform 2-sided prudent walk with total length $L-m-n$, for which we have proved the law of large numbers in Theorem \ref{scalingtwosided} and the invariance principle in Theorem \ref{scalingtwosided2}. Since $\bP_{\text{unif},L}\big( \widetilde \cG_L \big)\to 1$, we only need to consider $m\leq \kappa (\log L)^2$ and $n\leq \alpha \log L$. Since $m/\sqrt{L}, n/\sqrt{L}$ tend to $0$ uniformly as $L$ tends to infinity, $(\pi_i)_{i=1}^m$ and $(\pi_i)_{i=L-n}^{L}$ are negligible in the scaling limit, and hence Theorems \ref{unifscal} and \ref{unifscal2} follow from their counterparts for the uniform 2-sided prudent walk, with the direction $o$ distributed uniformly in $\cO$ by symmetry.
\qed

\subsection{Proof of Lemma \ref{finiteexc}}\label{dec:excu}

Let $M=M(L)$ be an increasing function of $L$  that will be specified later. We set
\be{fracretur}
\alpha_L:=\bP_{\text{unif},L}\big( \exists\, i\in [M,\gamma_L] \, s.t.\, \cE_i(\pi)=1\big)=\frac{\big|\{\pi\in \Omega_L: \,\exists\, i\in [M,\gamma_L]\, s.t.\, \cE_i(\pi) =1\} \big|}{|\Omega_L|}.
\ee
Multiply both the numerator and denominator by $2^{-L} e^{-\lambda^* L}$, we can then apply \eqref{probeve} together with \eqref{nrdcor} to obtain
\begin{align}
\alpha_L & \leq C L^3
\frac{\, \sum_{j=M}^L \bE^*\Big[\ind_{\{\cE_j=1\}}\,  \prod_{i=1}^{\gamma_L}  \frac{L_{R_{i-1}}(T_i,N_i,\cE_i)}{L^*_{R_{i-1}}(T_i,N_i,\cE_i)}
\Big]}{\bE^*\Big[\prod_{i=1}^{\gamma_L}  \frac{L_{R_{i-1}}(T_i,N_i,\cE_i)}{L^*_{R_{i-1}}(T_i,N_i,\cE_i)} \ind_{\{T_1+\dots+T_{\gamma_L}=L\}}
\Big]} \nonumber \\
& \leq C L^3
\frac{\,\sum_{j\geq M} \bE^*\Big[ \ind_{\{T_j \geq \frac{j}{2}\}}  \, \prod_{i=1}^{\gamma_L}  \frac{L_{R_{i-1}}(T_i,N_i,\cE_i)}{L^*_{R_{i-1}}(T_i,N_i,\cE_i)}
\Big]}{\bE^*\Big[\prod_{i=1}^{\gamma_L}  \frac{L_{R_{i-1}}(T_i, N_i,\cE_i)}{L^*_{R_{i-1}}(T_i, N_i,\cE_i)} \ind_{\{T_1+\dots+T_{\gamma_L}=L\}}
\Big]}:= CL^3 \frac{\Psi_1(L,M)}{D_L}, \label{fracreturrr}
\end{align}
where we used that $\cE_i=1$ only if $T_i\geq 1+R_{i-1}$, and  $R_i\geq \frac{i-1}{2}$ for every $i\in \N$ (cf.\ Section \ref{decppe}).
Lemma \ref{finiteexc} then follows immediately from \eqref{fracreturrr} and Claims \ref{c2} and \ref{c1} below.

\begin{claim}\label{c2}
There exist $c_1,c_2>0$ such that
$\Psi_1(L,M) \leq c_1\,  e^{-c_2 M}$ for every $M\in \N$ and $L\geq M$.
\end{claim}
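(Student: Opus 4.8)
The plan is to bound the numerator $\Psi_1(L,M)$ by dropping all the constraints that link the excursions to the total length $L$, and controlling each factor $L_{R_{i-1}}(T_i,N_i,\cE_i)/L^*_{R_{i-1}}(T_i,N_i,\cE_i)$ by the elementary estimate of Lemma \ref{Cbonud}. Concretely, since $(\widetilde V^{(i)})_{i\ge 1}$ is an i.i.d.\ sequence under $\bP^*$ and $(T_i,N_i,\cE_i)$ is a deterministic function of $(\widetilde V^{(j)})_{j\le i}$, I would first condition on $\gamma_L$ and on the values $(R_{i-1})$, and write
\be{psiexp}
\Psi_1(L,M)\le \sum_{j\ge M}\bE^*\Big[\ind_{\{T_j\ge j/2\}}\prod_{i=1}^{\gamma_L}\frac{L_{R_{i-1}}(T_i)}{L^*_{R_{i-1}}(T_i)}\Big],
\ee
using that $L_R(t,n,\epsilon)\le L_R(t)$ and $L_R^*(t,n,\epsilon)\ge$ the corresponding marginal, so the product is dominated by $\prod_i L_{R_{i-1}}(T_i)/L^*_{R_{i-1}}(T_i)$. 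Here each ratio is at most $C\,T_i\ind_{\{T_i\ge R_{i-1}\}}+\ind_{\{T_i<R_{i-1}\}}\le C\,T_i\vee 1$ by Lemma \ref{Cbonud}.

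Next I would change measure: the key point is that, conditionally on $(R_{j-1})_{j\le i}$, the increment $\widetilde V^{(i)}$ has law $\bP^*$ and hence $T_i$ has a distribution with exponential tail (by Remark \ref{remKK}, or rather the analogous statement for the slab excursions, which have even lighter tails because truncation only shortens them). So the extra factor $C\,T_i$ is harmless: absorbing the polynomial weight into the exponential tail yields a new i.i.d.-type bound in which each excursion still contributes a factor bounded by a constant $\bar c$, while the excursion index $j$ carrying the event $\{T_j\ge j/2\}$ contributes a factor $\bE^*[C\,\widetilde T_j\,\ind_{\{\widetilde T_j\ge j/2\}}]\le c_1 e^{-c_2 j}$. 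More precisely, after bounding the product over $i\ne j$ by $\prod_{i\ne j}\bE^*[C\,\widetilde T_i\vee 1]$ — which is finite and, crucially, remains bounded because the relevant expectations converge to a constant $<1$ or can be absorbed once $L_R^*$-normalization is accounted for — one is left with
\be{psifinal}
\Psi_1(L,M)\le \sum_{j\ge M} \bar c^{\,\gamma_L-1}\,\bE^*\big[C\,\widetilde T_j\,\ind_{\{\widetilde T_j\ge j/2\}}\big]\le \sum_{j\ge M} c_1' e^{-c_2 j}\le c_1 e^{-c_2 M},
\ee
possibly after noting that $\gamma_L$ is itself controlled (one may restrict to $\gamma_L\le L$ trivially, and the product of the per-excursion constants is $\le 1$ up to an overall constant since $\sum_r (\cdots)^r$ converged in \eqref{dived}). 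The geometric summability in $j$ is what produces the $e^{-c_2 M}$ decay uniformly in $L\ge M$.

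The main obstacle I anticipate is making the bound on $\prod_{i\ne j}\big(L_{R_{i-1}}(T_i)/L^*_{R_{i-1}}(T_i)\big)$ genuinely uniform in $L$ and in the (random) number of excursions $\gamma_L$: naively each factor is bounded by $C\,T_i\vee 1$, whose expectation under $\bP^*$ is a constant possibly larger than $1$, so a crude product over $\gamma_L\asymp L$ factors would blow up. The resolution is to exploit that $L_R^*(t)$ is itself the $\bP_R^*$-probability of an excursion of length $t$, so the ratio $L_R(t)/L_R^*(t)$ is not an arbitrary weight but compares two genuine sub-probability kernels; summing $\sum_t L_R(t)\le \sum_t L_\infty(t)=\widehat K(\lambda^*)^{?}$-type quantities shows the per-excursion ``mass'' stays $\le 1$ up to the single $O(t)$ correction, which only occurs on the atypical event $T_i\ge R_{i-1}\ge (i-1)/2$, again of exponentially small probability. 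Thus the $O(T_i)$ penalty is only paid on a summably-rare set, and the product telescopes to a constant. Pinning down this last point cleanly — equivalently, showing $D_L$ in \eqref{fracreturrr} is not too small, which is deferred to Claim \ref{c1} — is where the real work lies, but for Claim \ref{c2} alone the exponential tail of $\widetilde T_j$ together with the $\{T_j\ge j/2\}$ restriction does the job.
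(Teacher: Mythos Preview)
Your intuition is right --- the whole point is that the ratio penalty $C\,T_i$ is only incurred on the event $\{T_i\ge R_{i-1}\}\subset\{T_i\ge (i-1)/2\}$, which is exponentially rare in $i$, so the product of per-excursion factors converges. But the argument as written has two real gaps.

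First, the passage from $\prod_i L_{R_{i-1}}(T_i,N_i,\cE_i)/L^*_{R_{i-1}}(T_i,N_i,\cE_i)$ to $\prod_i L_{R_{i-1}}(T_i)/L^*_{R_{i-1}}(T_i)$ is not justified: you wrote ``$L_R^*(t,n,\epsilon)\ge$ the corresponding marginal'', which is backwards (a joint mass is at most its marginal), so the ratio inequality goes the wrong way. Second, and more seriously, you cannot factor the expectation as $\prod_{i\ne j}\bE^*[C\,\widetilde T_i\vee 1]$: the $(T_i,N_i,\cE_i)$ are \emph{not} independent under $\bP^*$ (each depends on $R_{i-1}$, hence on all earlier excursions), and even after passing to the i.i.d.\ $\widetilde T_i$'s, the bound $C\,\widetilde T_i\vee 1$ has expectation strictly larger than $1$, so the product over $\gamma_L\asymp L$ factors diverges. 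Your ``resolution'' correctly identifies that the indicator $\ind_{\{T_i\ge R_{i-1}\}}$ saves you, but you never explain how to get from the raw product of ratios to a product of factors $(1+c\,\widetilde T_i\,\ind_{\{\widetilde T_i\ge (i-1)/2\}})$ with the indicator in place.

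The paper closes this gap with an inductive conditioning argument (its Lemma~\ref{lemmaKey}): bound each ratio by its maximum with $1$, condition on $\cF_{L-1}$, and observe that summing $\max\{L_{R_{L-1}}(t,n,\gep),L^*_{R_{L-1}}(t,n,\gep)\}$ over $(n,\gep)$ gives exactly $L^*_{R_{L-1}}(t)$ when $t<R_{L-1}$ (by \eqref{L0=L*0}) and at most $(1+ct)L^*_{R_{L-1}}(t)$ when $t\ge R_{L-1}$ (by Lemma~\ref{Cbonud}). Since $L^*_{R_{L-1}}$ is the genuine conditional law of $T_L$ given $\cF_{L-1}$, this yields $\bE^*[\Phi(\ldots,T_L)(1+cT_L\ind_{\{T_L\ge R_{L-1}\}})\mid\cF_{L-1}]$; now replace $T_L\le\widetilde T_L$ and $R_{L-1}\ge (L-1)/2$, note $\widetilde T_L$ is independent of $\cF_{L-1}$, and iterate. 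This is precisely the step that converts the dependent product into an independent one with the crucial indicator intact, and it is what your sketch is missing.
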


\begin{claim}\label{c1}
There exists $c_3>0$ such that $D_L\geq c_3$ for every $L\in \N$.
\end{claim}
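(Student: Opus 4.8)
The idea is to recognise $D_L$, via \eqref{tug}, as a sum over excursion configurations of total length exactly $L$, and then to lower‑bound it by retaining only a sub‑family of configurations whose combined weight stays bounded away from $0$ and which, after summing out the extensions $n_i$, collapses to a genuine (non‑defective) renewal mass of the probability law $K^*$. Applying \eqref{tug} to each configuration with $t_1+\dots+t_r=L$, for which the incomplete‑excursion factor is $\widehat L_{R_r}(0)=1$, the quantity $D_L$ from \eqref{fracreturrr} equals
$$D_L=\sum_{r\ge1}\ \sum_{\substack{(t_i,n_i,\epsilon_i)_{i=1}^r\\ t_1+\dots+t_r=L}}\ \prod_{i=1}^r L_{R_{i-1}}(t_i,n_i,\epsilon_i),\qquad R_i=R_{i-2}+n_i,\ R_{-1}=R_0=0.$$
Fix a large constant $A\in\N$ and keep only the configurations in which the first two excursions are ``flat of length $A$'', i.e. $(t_1,n_1,\epsilon_1)=(A,A,1)$ and $(t_2,n_2,\epsilon_2)=(A,A,0)$ (effective‑walk excursions identically $0$), and in which $\epsilon_i=0$ and $t_i\le A+\lfloor(i-2)/2\rfloor$ for every $i\ge3$. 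By \eqref{LR} the first two factors equal $L_0(A,A,1)\,L_A(A,A,0)=(e^{-\lambda^*}/2)^{2A}=:c_A>0$, independent of $L$; and since $R_1=R_2=A$ and each $n_i\ge1$, the recursion $R_i=R_{i-2}+n_i$ forces $R_{i-1}\ge A+\lfloor(i-2)/2\rfloor\ge t_i$ for all $i\ge3$, whatever the values of $n_3,\dots,n_r$.

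When $R\ge t$ the confinement $\{V_i\le R\ \forall i\le n\}$ in \eqref{LR} is automatic on $\{\sum_i|U_i|=t-n\}$ (there $V_i\le t-n<t\le R$), so $L_{R_{i-1}}(t_i,n_i,0)$ no longer depends on $R_{i-1}$ and $\sum_{n\ge1}L_{R_{i-1}}(t_i,n,0)=K^*(t_i)$ by \eqref{defK} and the computation in \eqref{dived}. Summing out $n_3,\dots,n_r$ and relabelling $s_j:=t_{j+2}$ gives
$$D_L\ \ge\ c_A\sum_{r'\ge0}\ \sum_{\substack{s_1+\dots+s_{r'}=L-2A\\ s_j\le A+\lfloor j/2\rfloor\ \forall j}}\ \prod_{j=1}^{r'}K^*(s_j)\ =:\ c_A\,u_A(L-2A),$$
a length‑capped renewal mass for the increment law $K^*$. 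By Remark \ref{remKK}, $K^*$ is a probability on $\N$ with exponential tails, hence with finite mean $\mu:=\sum_t tK^*(t)$; and $K^*(1)=\tfrac12|\cI_1|e^{-\lambda^*}=e^{-\lambda^*}/2>0$, so $K^*$ is aperiodic and the renewal theorem yields $u(M):=\sum_{r'\ge0}\sum_{s_1+\dots+s_{r'}=M}\prod_jK^*(s_j)\to1/\mu$, in particular $u(M)\ge1/(2\mu)$ for $M\ge M_0$. The cap is removed at a cost uniform in $L$: $u(M)-u_A(M)$ is at most the probability that some renewal increment $S_j$ exceeds $A+\lfloor j/2\rfloor$, which by Remark \ref{remKK} is $\le\sum_{j\ge1}c_1e^{-c_2(A+\lfloor j/2\rfloor)}\le Ce^{-c_2A}$. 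Choosing $A$ so large that $Ce^{-c_2A}\le1/(4\mu)$ gives $D_L\ge c_A u_A(L-2A)\ge c_A/(4\mu)$ for all $L\ge M_0+2A$; for the finitely many $L<M_0+2A$ one uses the trivial bound $D_L\ge(e^{-\lambda^*}/2)^L$ coming from the single ``staircase'' configuration $(t_i,n_i,\epsilon_i)=(1,1,\epsilon_i)$, $i=1,\dots,L$ (each factor of which equals $e^{-\lambda^*}/2$). Taking $c_3=\min\bigl(c_A/(4\mu),(e^{-\lambda^*}/2)^{M_0+2A}\bigr)$ proves the claim.

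The delicate point, which dictates the shape of the argument, is that $D_L$ is a balance between an exponentially large number of configurations and an exponentially small weight per configuration, so it cannot be bounded below by a single configuration, nor by a renewal capped at a fixed length (both of which decay in $L$). One is therefore forced to treat the degenerate first excursion (slab $R_0=0$) by hand, to spend $O(1)$ steps inflating the slab so that the confinement constraint $\{V_i\le R\}$ never binds thereafter, and only then to identify the surviving sum with an honest $K^*$‑renewal mass, at which point the renewal theorem applies.
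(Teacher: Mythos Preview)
Your proof is correct and follows essentially the same strategy as the paper's: seed the first few excursions deterministically so that the slab width $R_{i-1}$ exceeds the subsequent excursion lengths, after which the slab constraint becomes vacuous and the remaining sum is an honest $K^*$-renewal mass to which the renewal theorem applies, with a cap-removal estimate controlled by the exponential tail of $K^*$. The only cosmetic differences are that the paper seeds with $A$ excursions of length~$1$ and works through the $\bE^*$-coupling with $(\tilde T_i)$, whereas you seed with two flat excursions of length~$A$ and unwind $D_L$ directly as the configuration sum $\sum\prod L_{R_{i-1}}$; both routes are valid and equivalent in spirit.
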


\medskip

{\bf Proof of Claim \ref{c2}.} Recall from Section \ref{sec:sampling} how $(T_i,N_i,\cE_i)_{i\geq 1}$ is constructed from the i.i.d.\ sequence $(\widetilde V_i, \widetilde T_i,\widetilde N_i)_{i\geq 1}$ with law $\bP^*$, with $\tilde T_i\geq T_i\, \forall\, i\in \N$. We first state and prove a key lemma.

\begin{lemma}\label{lemmaKey}
Let $L\in \N$, and let $\Phi:\R^L_+\to \R_+$ be any function that is non-decreasing in each of its $L$ arguments. Then there exists $c>0$ independent of $L$ and $\Phi$, such that
\be{eqKey1}
\bE^*\bigg[\!\Phi(T_1,\dots, T_L) \prod_{i=1}^{\gamma_L}  \frac{L_{R_{i-1}}(T_i,N_i,\cE_i)}{L^*_{R_{i-1}}(T_i,N_i,\cE_i)} \bigg]
\leq
\bE^*\bigg[\Phi(\tilde T_1,\dots,\tilde T_L) \prod_{i=1}^{L}  \big(1 + c  \tilde T_i\,  \ind_{\{\tilde T_i\geq \frac{i-1}{2}\}}\big) \bigg].
\ee
\end{lemma}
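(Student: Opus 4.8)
The plan is to prove \eqref{eqKey1} by a downward induction that swaps the tilting factors one excursion at a time, deferring the passage from the truncated lengths $T_i$ to the untruncated $\widetilde T_i$ to the last line. I work throughout with the filtration $\mathcal D_i:=\sigma\big((T_j,N_j,\cE_j)_{j\le i}\big)$; recall from Section~\ref{sec:sampling} that $R_{i-1}$ is $\mathcal D_{i-1}$-measurable with $R_{i-1}\ge(i-1)/2$, that conditionally on $\mathcal D_{i-1}$ the triple $(T_i,N_i,\cE_i)$ has law $\bP_{R_{i-1}}^{*}$, and that $\gamma_L\le L$ since every $T_i\ge1$. Write $Z_i:=L_{R_{i-1}}(T_i,N_i,\cE_i)/L_{R_{i-1}}^{*}(T_i,N_i,\cE_i)\ge0$ and $M_i:=Z_i$ on $\{i\le\gamma_L\}$, $M_i:=1$ otherwise, so that each $M_i$ is $\mathcal D_i$-measurable, $M_i\ge0$, and $\prod_{i=1}^{\gamma_L}Z_i=\prod_{i=1}^{L}M_i$. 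Put $b_i(x):=1+c\,x\,\ind_{\{x\ge(i-1)/2\}}$ ($c>0$ to be fixed), which is nondecreasing on $[0,\infty)$ and $\ge 1$, set $\Psi_k:=\bE^{*}\big[\Phi(T_1,\dots,T_L)\prod_{i>k}b_i(T_i)\mid\mathcal D_k\big]\ge0$, and for $0\le k\le L$ define
\[
\Xi_k:=\bE^{*}\Big[\Phi(T_1,\dots,T_L)\,\prod_{i=1}^{k}M_i\,\prod_{i=k+1}^{L}b_i(T_i)\Big],
\]
so that $\Xi_L=\bE^{*}\big[\Phi(T_1,\dots,T_L)\prod_{i=1}^{\gamma_L}Z_i\big]$ and $\Xi_0=\bE^{*}\big[\Phi(T_1,\dots,T_L)\prod_{i=1}^{L}b_i(T_i)\big]$.

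The heart of the argument is the bound $\Xi_k\le\Xi_{k-1}$. Conditioning inside $\Xi_k$ on $\mathcal D_k$ factors out $\prod_{i\le k}M_i$ and leaves $\Psi_k$; conditioning further on $\mathcal D_{k-1}$ and pulling out the nonnegative $\mathcal D_{k-1}$-measurable factor $\prod_{i<k}M_i$, the inequality $\Xi_k\le\Xi_{k-1}$ reduces to $\bE^{*}[M_k\Psi_k\mid\mathcal D_{k-1}]\le\bE^{*}[b_k(T_k)\Psi_k\mid\mathcal D_{k-1}]$. Since the conditional law of the future excursions depends on $\mathcal D_k$ only through $R_{k-1}$ and $R_k=R_{k-2}+N_k$, the random variable $\Psi_k$ depends on $\mathcal D_k$ only through $(T_1,\dots,T_k,N_1,\dots,N_k)$ and not on $\cE_k$; let $\psi_k(t,n)$ be its value on $\{T_k=t,\,N_k=n\}$. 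Using $(T_k,N_k,\cE_k)\sim\bP_{R_{k-1}}^{*}$ given $\mathcal D_{k-1}$, and writing $R:=R_{k-1}$, $S:=T_1+\dots+T_{k-1}$, $L_R(t,n):=L_R(t,n,0)+L_R(t,n,1)$ and likewise $L_R^{*}(t,n)$,
\[
\bE^{*}[M_k\Psi_k\mid\mathcal D_{k-1}]=\sum_{t\ge1}\Big(\ind_{\{S+t\le L\}}\sum_n\psi_k(t,n)\,L_R(t,n)+\ind_{\{S+t>L\}}\sum_n\psi_k(t,n)\,L_R^{*}(t,n)\Big).
\]
By \eqref{L0=L*0} one has $L_R(t,n,0)=L_R^{*}(t,n,0)$, while reading \eqref{grandb} in the proof of Lemma~\ref{Cbonud} off term by term in $n$ yields $L_R(t,n,1)\le L_R^{*}(t,n,0)+t\,L_R^{*}(t,n+2,0)$. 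Granting that $n\mapsto\psi_k(t,n)$ is nondecreasing (see below), the shifted term is absorbed, $\sum_n\psi_k(t,n)L_R^{*}(t,n+2,0)\le\sum_n\psi_k(t,n)L_R^{*}(t,n,0)$; together with the vanishing $L_R(t,n,1)=L_R^{*}(t,n,1)=0$ for $t<R$, this gives $\sum_n\psi_k(t,n)L_R(t,n)\le\big(1+c\,t\,\ind_{\{t\ge R\}}\big)\sum_n\psi_k(t,n)L_R^{*}(t,n)$ for a suitable constant $c$. Since $R=R_{k-1}\ge(k-1)/2$, the indicator is dominated by $\ind_{\{t\ge(k-1)/2\}}$, and then each bracket in the display is $\le b_k(t)\sum_n\psi_k(t,n)L_R^{*}(t,n)$ (on the second bracket use $b_k\ge1$); summing over $t$ produces exactly $\bE^{*}[b_k(T_k)\Psi_k\mid\mathcal D_{k-1}]$.

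It remains to show $n\mapsto\psi_k(t,n)$ is nondecreasing, which is the main obstacle. I would get it from a pathwise coupling of the future excursions: generate $(T_i,N_i,\cE_i)_{i>k}$ from one i.i.d.\ sequence $(\widetilde V^{(i)})_{i>k}$ by $V^{(i)}=T_{R_{i-1}}\widetilde V^{(i)}$, $(N_i,T_i,\cE_i)=(N,T,\cE)(V^{(i)})$, $R_i=R_{i-2}+N_i$, started from the given $R_{k-1}$ and from $R_k=R_{k-2}+N_k$. The elementary point is that for $R\le R'$ the path $T_R\widetilde V$ is obtained from $T_{R'}\widetilde V$ by a further truncation, and truncation can only lower the number of steps and the total length; hence raising $N_k$ raises $R_k$, and then by induction along the trajectory raises $R_i$, $N_i$ and $T_i$ for every $i>k$. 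As $\Phi$ is nondecreasing in each argument and each $b_i$ is nondecreasing, the integrand defining $\Psi_k$ then increases pathwise, so $\psi_k(t,n)$ increases in $n$. (This coupling of consecutive excursions through the truncation levels $R_i$ is exactly why the $n$-wise reflection estimate is needed above rather than the summed bound of Lemma~\ref{Cbonud}.)

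Chaining $\Xi_L\le\Xi_{L-1}\le\cdots\le\Xi_0$ yields $\bE^{*}\big[\Phi(T_1,\dots,T_L)\prod_{i=1}^{\gamma_L}Z_i\big]\le\bE^{*}\big[\Phi(T_1,\dots,T_L)\prod_{i=1}^{L}b_i(T_i)\big]$. Finally, since the construction of Section~\ref{sec:sampling} gives $T_i\le\widetilde T_i$ for all $i$ while $\Phi$ and every $b_i$ are nondecreasing, the integrand on the right is pointwise at most $\Phi(\widetilde T_1,\dots,\widetilde T_L)\prod_{i=1}^{L}\big(1+c\,\widetilde T_i\,\ind_{\{\widetilde T_i\ge(i-1)/2\}}\big)$, and taking expectations gives \eqref{eqKey1}.
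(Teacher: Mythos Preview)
Your argument is correct, but it takes a markedly more intricate route than the paper's. The paper's proof also runs a downward induction, but at step $k$ it immediately replaces $T_k$ by $\tilde T_k$ (using $T_k\le\tilde T_k$ pathwise and the fact that $\tilde T_k$ is independent of $\cF_{k-1}$). The effect is that after integrating out $\tilde T_{k+1},\dots,\tilde T_L$, the residual test function at stage $k$ depends on the $k$-th excursion only through $t=T_k$, \emph{not} through $n=N_k$; one can then sum freely over $n,\gep$ and invoke the already-summed bound of Lemma~\ref{Cbonud}. No coupling of future excursions and no $n$-wise refinement are needed.

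Your approach keeps the truncated lengths $T_i$ in play until the very end, which forces the future $\Psi_k$ to depend on $N_k$ through $R_k=R_{k-2}+N_k$. You then need two extra ingredients: (i) the $n$-resolved inequality $L_R(t,n,1)\le L_R^{*}(t,n,0)+t\,L_R^{*}(t,n+2,0)$, read off term by term from the reflection map $G_{n,t}^R$ in the proof of Lemma~\ref{Cbonud}; and (ii) the pathwise monotonicity of $R\mapsto(N(T_R\tilde V),T(T_R\tilde V))$, which propagates through the recursion $R_i=R_{i-2}+N_i$ to give $n\mapsto\psi_k(t,n)$ nondecreasing. Both ingredients are valid (the second is a short but genuine check you have sketched correctly), so your proof goes through with, say, $c=2$. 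The trade-off is that the paper's proof is shorter and uses only the statement of Lemma~\ref{Cbonud}, whereas yours digs into its proof; on the other hand, your coupling observation is of independent interest and makes explicit the monotone dependence of the excursion lengths on the slab widths.
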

\bpr
For $n\in \N$, let $\cF_n$ be the $\sigma$-algebra generated by $(\widetilde T_i, T_i,N_i,\cE_i)_{i\leq n}$. For ease of notation, let $A_L$ denote the l.h.s.\ of \eqref{eqKey1}. Note that
\begin{align}\label{LL*0}
A_L & \leq \bE^*\bigg[ \Phi( T_1,\dots, T_L) \prod_{i=1}^{L}  \max \bigg\{ \frac{L_{R_{i-1}}(T_i,N_i,\cE_i)}{L^*_{R_{i-1}}(T_i,N_i,\cE_i)}, 1\bigg\} \bigg] \\
& = \bE^*\bigg[\prod_{i=1}^{L-1}  \max \Big\{ \frac{L_{R_{i-1}}(T_i,N_i,\cE_i)}{L^*_{R_{i-1}}(T_i,N_i,\cE_i)}, 1\Big\}  \, \, H_L\bigg], \label{LL*02}
\end{align}
with
\begin{equation} \label{defHL}
\begin{split}
H_L & := \bE^*\bigg[\Phi( T_1,\dots, T_L)   \max \Big\{ \frac{L_{R_{L-1}}(T_L,N_L,\cE_L)}{L^*_{R_{L-1}}(T_L,N_L,\cE_L)}, 1\Big\}\Big| \cF_{L-1}\bigg]  \\
&\ = \sum_t \Phi(T_1,\dots,T_{L-1}, t) \sum_{n\leq t}\sum_{\epsilon=0,1} \max \big\{ {L_{R_{L-1}}(t,n,\gep)},  L^*_{R_{L-1}}(t,n,\gep) \big\}.
\end{split}
\end{equation}
When $t<R_{L-1}$, we have ${L_{R_{L-1}}(t,n,1)}= L^*_{R_{L-1}}(t,n,1)=0$, and ${L_{R_{L-1}}(t,n,0)}= L^*_{R_{L-1}}(t,n,0)$ by \eqref{L0=L*0}, so that
\begin{equation}
\sum_{n\leq t}\sum_{\epsilon=0,1} \max \big\{ {L_{R_{L-1}}(t,n,\gep)},  L^*_{R_{L-1}}(t,n,\gep) \big\} = L^*_{R_{L-1}}(t).
\end{equation}
When $t\geq R_{L-1}$, we have
\begin{equation}
\begin{split}
& \sum_{n\leq t}\sum_{\epsilon=0,1} \max \big\{ {L_{R_{L-1}}(t,n,\gep)},  L^*_{R_{L-1}}(t,n,\gep) \big\}  \\
& \quad \leq \  \sum_{n\leq t}\sum_{\epsilon=0,1} ({L_{R_{L-1}}(t,n,\gep)} + L^*_{R_{L-1}}(t,n,\gep))
= L_{R_{L-1}}(t) + L^*_{R_{L-1}}(t) \leq (1+ct) L^*_{R_{L-1}}(t),
\end{split}
\end{equation}
where we applied Lemma \ref{Cbonud}. Therefore we have
\begin{equation}\label{rtg}
\begin{split}
H_L&\leq \sum_{t} \Phi(T_1,\dots, T_{L-1},t) \big(1+ct \ind_{\{ t\geq R_{L-1}\}}\big) L^*_{R_{L-1}}(t),
\\
&=
\bE^*\left[ \Phi(T_1,\dots,T_{L-1}, T_L) \big( 1+cT_L \ind_{\{ T_L\geq R_{L-1}\}} \big) \,  \big|\, \cF_{L-1}\right].
\end{split}
\end{equation}
Since $R_{L-1}\geq \frac{L-1}{2}$  and $\widetilde T_L\geq T_L$, we can replace $R_{L-1}$ by $\frac{L-1}{2}$ and $T_L$ by $\tilde T_L$ in the r.h.s. of \eqref{rtg}. Moreover, note that $\tilde T_L$ does not depend on $R_{L-1}$, and hence we can plug \eqref{rtg} into \eqref{LL*0} to obtain
\be{LL*2}
\nonumber \begin{split}
A_L \leq
\bE^*\bigg[ \bigg( \prod_{i=1}^{L-1}  \max \big\{ \frac{L_{R_{i-1}}(T_i,N_i,\cE_i)}{L^*_{R_{i-1}}(T_i,N_i,\cE_i)}, 1\big\} \bigg)
\Phi(T_1,\dots, T_{L-1},\tilde T_L)\big(1+ c \tilde T_L\,  \ind_{\{\tilde T_L\geq \frac{L-1}{2}\}}\big)\bigg].
\end{split}
\ee
We can now iterate the argument to deduce \eqref{eqKey1}.
\epr

To prove Claim \ref{c2}, we now apply Lemma \ref{lemmaKey} with $\Phi(t_1,\dots,t_L)=\ind_{\{t_j\geq \frac{j}{2} \}}$
for $j\geq M$ to obtain
\begin{align}
\Psi_1(L,M) & \leq \sum_{j\geq M} \bE^*\bigg[ \ind_{\{\tilde T_j\geq \frac{j}{2} \}} \prod_{i=1}^{L}  \big(1 + c \tilde T_i\,  \ind_{\{\tilde T_i\geq \frac{i-1}{2}\}}\big) \bigg] \nonumber\\
& \leq \sum_{j\geq M} \bE^*\bigg[ (1+c\tilde T_j) \ind_{\{\tilde T_j\geq \frac{j}{2} \}} \prod_{i\neq j\leq L}  \Big(1 + c\,  \tilde T_i\,  \ind_{\{\tilde T_i\geq \frac{i-1}{2}\}}\Big)
\bigg] \nonumber \\
& = \sum_{j\geq M} \bE^*\bigg[ (1+c\tilde T_1) \ind_{\{\tilde T_1\geq \frac{j}{2} \}}\bigg] \prod_{i\neq j\leq L} \Big(1 + c \bE^*[\tilde T_1 \ind_{\{\tilde T_1\geq \frac{i-1}{2}\}}]\Big)
\end{align}
Since $\widetilde T_1$ has exponential tail under $\bP^*$ (cf. Remark \ref{remKK}), there exist $C_1, C_2>0$ such that
\be{Estar}
\bP^*(\tilde T_1\geq l) \leq \bE^*\big[\tilde T_1 \ind_{\{\tilde T_1\geq  \ell\}}\big] \leq C_1\,  e^{-C_2 \ell} \qquad \mbox{for all } \ell \in \N.
\ee
This implies that
\begin{equation}\label{defA3}
\Psi_1(L,M) \leq (1+c) \sum_{j\geq M} C_1 e^{-C_2\,  \frac{j}{2}} \prod_{i=1}^\infty \Big(1+c\,  C_1 e^{-C_2\,  \frac{i-1}{2}}\Big)
\leq c_1 e^{-c_2\, M},
\end{equation}
which concludes the proof of Claim \ref{c2}.
\qed
\medskip

{\bf Proof of Claim \ref{c1}}
The claim is essentially a consequence of the renewal theorem. Note that by construction, we have $R_0=0$, $\cE_1=1$, and $L_0(T_1, N_1, 1)= L^*_0(T_1, N_1, 1)$, and when $R_{i-1}\geq 1$ and $T_i=1$, or when $T_i<R_{i-1}$, we must have $\cE_i=0$ and $L_{R_{i-1}}(T_i, N_i,0)=L^*_{R_{i-1}}(T_i, N_i,0)$. Therefore, with $A>0$ to be chosen later, we can bound
\be{loBdeno}
\begin{split}
M_L:=\ &{\bE^*\bigg[\prod_{i=1}^{\gamma_L}  \frac{L_{R_{i-1}}(T_i, N_i,\cE_i)}{L^*_{R_{i-1}}(T_i, N_i,\cE_i)} \ind_{\{T_1+\dots+T_{\gamma_L}=L\}}
\bigg]} \\
\geq\ &
\bP^*\big( T_1+\dots+T_{\gamma_L}=L,\ T_i=1\, \forall\,  i\in [1, A], \ T_j < R_{j-1} \, \forall\, j\in [A+1, \gamma_L]
\big).
\end{split}
\ee
Recall that $(T_i, N_i, \cE_i)_{i\in\N}$ is constructed from $(\widetilde V_i, \widetilde T_i, \widetilde N_i)_{i\in\N}$ with law $\bP^*$ such that $\tilde T_i\geq T_i$ a.s., and when $\widetilde T_i=1$ or $\widetilde T_i \leq R_{i-1}$, we have $T_i=\tilde T_i$ (cf.\ Section \ref{sec:sampling}). Since $R_1, R_2\geq 1$ and $R_i\geq \frac{i-1}{2}$ for $i\geq 3$, we can bound the r.h.s.\ of \eqref{loBdeno} by
\begin{align}\label{thuiboun}
\nonumber M_L&\geq \bP^*\big(\tilde T_1+\dots+ \tilde T_{\tilde \gamma_L}=L,\ \tilde T_i=1\ \forall \, i\in [1, A], \
\tilde T_j < \tfrac{j-1}{2}\ \forall\, j>A\big),\\
 &=\bP^*(\tilde T_1=1)^{A} \   \bP^*\Big(\tilde T_1+\dots+ \tilde T_{\, \tilde \gamma_{L-A}}=L-A,   \ \tilde T_i < \tfrac{A+i-1}{2}\ \forall\, i\geq 1 \Big),
\end{align}
where $\tilde \gamma_L$ is the counterpart of $\gamma_L$ for  $(\widetilde T_i)_{i\in \N}$ (recall \eqref{gammaLV}). Since $(\tilde T_j)_{j\in\N}$ is i.i.d. with exponential tail, we may pick $A\in \N$ large enough such that
\be{guar1}
\bP^*\Big(\tilde T_i < \frac{A+i-1}{2}\ \forall\, i\geq 1 \Big) \geq 1-\frac{1}{4\bE^*[\tilde T_1]}.
\ee
Having chosen $A$, the renewal theorem then ensures that there exists $L_0\in \N$ such that
\be{guar2}
\bP^*(\tilde T_1+\dots+ \tilde T_{\, \tilde \gamma_{L-A}}=L-A)\geq \frac{1}{2\bE^*[\tilde T_1]} \quad \forall\  L>L_0.
\ee
Combining \eqref{guar1} and \eqref{guar2} then shows that the r.h.s.\ in \eqref{thuiboun} is bounded from below by a positive constant uniformly in $L\geq L_0$. The proof is then complete.
\qed

\subsection{Proof of Lemma \ref{finitesteps}}

The proof is similar to that of Lemma \ref{finiteexc}. Let $\delta>0$ and $\kappa>0$ and set
$$
\beta_L:=\bP_{\text{unif},L}\big( T_1(\pi)+\dots+T_{\delta \log L}(\pi) \geq \kappa (\log L)^2 \big)=
\frac{|\{\pi\in \Omega_L:  T_1+\cdots+T_{\delta \log L} \geq \kappa (\log L)^2\}|}{|\Omega_L| }.
$$
Since $T_1(\pi)+\cdots +T_{\delta \log L} \geq \kappa (\log L)^2$ implies that $T_i\geq \kappa\delta^{-1} \log L$ for some
$1\leq i\leq \delta \log L$, similar to \eqref{fracreturrr}, we have
\be{fracretur22}
\beta_L\leq C L^3 \,
\frac{\sum_{j=1}^{\delta \log L} \bE^*\Big[ \ind_{\{T_j\geq {\kappa \delta^{-1} \log L}\}} \prod_{i=1}^{\gamma_L}  \frac{L_{R_{i-1}}(T_i, N_i,\cE_i)}{L^*_{R_{i-1}}(T_i, N_i, \cE_i)} 
\Big]}{\bE^*\Big[\prod_{i=1}^{\gamma_L}  \frac{L_{R_{i-1}}(T_i, N_i, \cE_i)}{L^*_{R_{i-1}}(T_i, N_i,\cE_i)} \ind_{\{T_1+\dots+T_{\gamma_L}=L\}}
\Big]}:= C L^3 \frac{\Psi_2(L)}{D_L}.
\ee
By Claim \ref{c1}, $D_L$ is bounded away from $0$ uniformly in $L$.  Using Lemma \ref{lemmaKey} and \eqref{Estar}, we obtain
\begin{align}\label{B1}
\nonumber \Psi_2(L)
&\leq
\sum_{j=1}^{ \delta \log L}
{\bE^*\bigg[ \ind_{\{\tilde T_j\geq {\kappa \delta^{-1} \log L}\}} \prod_{i=1}^{L}  \left(1+ c \tilde T_i\ind_{\{ \tilde T_i \geq \frac{i-1}{2}\}}\right)
\bigg]} \nonumber\\
&\leq
(\delta \log L)\,  \bE^*\Big[(1+c\tilde T_1) \ind_{\{\tilde T_1\geq \kappa {\delta^{-1} \log L}\}}\Big] \prod_{i=1}^{\infty}
\Big(1+ c\bE^*[\tilde T_1\ind_{\{ \tilde T_1 \geq \frac{i-1}{2}\}}]\Big) \nonumber\\
&\leq (\delta \log L) c_1 e^{-c_2 \kappa {\delta^{-1} \log L}},
\end{align}
which tends to $0$ as $L$ tends to infinity if $\kappa$ is chosen large enough.
\qed

\subsection{Proof of Lemma \ref{lastExc}}\label{dec:excu3}
As in the proof of Lemmas \ref{finiteexc} and \ref{finitesteps}, given $\delta>0$, we set
$$
\rho_L:=\bP_{\text{unif},L}\big( L-(T_1(\pi)+\dots+T_{\gamma_L}(\pi)) \geq \alpha \log L \big)=
\frac{|\{\pi\in\Omega_L: ( L-(T_1+\cdots+T_{\gamma_L}) \geq \alpha \log L)\}|}{|\Omega_L| }.
$$
Similar to \eqref{fracreturrr}, we have
\be{thu}
\rho_L \leq
CL^3\, \frac{\bE^*\bigg[ \ind_{\{L-(T_1+\dots+T_{\gamma_L}) \geq \alpha \log L \}} \prod_{i=1}^{\gamma_L}  \frac{L_{R_{i-1}}(T_i, N_i,\cE_i)}{L^*_{R_{i-1}}(T_i, N_i, \cE_i)} 
\bigg]}{\bE^*\bigg[\prod_{i=1}^{\gamma_L}  \frac{L_{R_{i-1}}(T_i, N_i, \cE_i)}{L^*_{R_{i-1}}(T_i, N_i,\cE_i)} \ind_{\{T_1+\dots+T_{\gamma_L}=L\}}
\bigg]}=:C L^3 \frac{\Psi_3(L)}{D_L}.
\ee
By Claim \ref{c1}, $D_L$ is bounded away from $0$ uniformly in $L$.
Since $L-(T_1+\dots+T_{\gamma_L})\geq \alpha \log L$ implies $\max\{T_1,\dots,T_L\}\geq \alpha \log L$, again by Lemma \ref{lemmaKey} and \eqref{Estar}, we have
\begin{align}\label{trio}
\nonumber \Psi_3(L)&\leq \, \bE^*\bigg[ \ind_{\{ \max\{T_1,\dots,T_L\} \geq \alpha  \log L \}} \prod_{i=1}^{\gamma_L}  \frac{L_{R_{i-1}}(T_i, N_i,\cE_i)}{L^*_{R_{i-1}}(T_i, N_i, \cE_i)}
\bigg]\\
&\leq
 \bE^*\bigg[ \ind_{\{ \max\{\tilde T_1,\dots,\tilde T_L\} \geq \alpha \log L \}}  \prod_{i=1}^L  \big(1 + c\,  \tilde T_i\,  \ind_{\{\tilde T_i\geq \frac{i-1}{2}\}}\big)\bigg]  \nonumber \\
& \leq L \bE^*\Big[(1+c\tilde T_1) \ind_{\{\tilde T_1\geq \alpha \log L\}}\Big] \prod_{i=1}^{\infty}
\Big(1+ c\bE^*[\tilde T_1\ind_{\{ \tilde T_1 \geq \frac{i-1}{2}\}}]\Big) \leq c_1 L e^{-c_2 \alpha \log L},
\end{align}
which tends to $0$ as $L$ tends to infinity if $\alpha$ is chosen large enough.
\qed

%

\bibliographystyle{imsart-nameyear}
\bibliography{cnp}

\end{document}